\documentclass[11pt]{amsart}

\usepackage{graphicx}%
\usepackage{multirow}%
\usepackage{amsmath,amssymb,amsfonts}%
\usepackage{amsthm}%
\usepackage{mathrsfs}%
\usepackage[title]{appendix}%
\usepackage{xcolor}%
\usepackage{textcomp}%
\usepackage{manyfoot}%
\usepackage{booktabs}%
\usepackage{algorithm}%
\usepackage{algorithmicx}%
\usepackage{algpseudocode}%
\usepackage{listings}%
\usepackage{lineno}
\usepackage[colorlinks=true, linkcolor=red, urlcolor=blue, citecolor=blue]{hyperref}
\usepackage{xcolor}
\usepackage{tikz-cd}

\theoremstyle{definition}
\newtheorem{theorem}{Theorem}[section]
\newtheorem{proposition}[theorem]{Proposition}
\newtheorem{lemma}[theorem]{Lemma}
\newtheorem{corollary}[theorem]{Corollary}
\theoremstyle{definition}

\newtheorem{example}[theorem]{Example}
\theoremstyle{remark}
\newtheorem{remark}[theorem]{Remark}

\allowdisplaybreaks[1]
\usepackage[left=2.8cm, right=2.8cm, top=3cm, bottom=4cm]{geometry}

\title[Spectral Properties of Convolution Operators on Non-Separable Sequence Spaces]{Spectral Properties of Convolution Operators on Non-Separable Sequence Spaces}

\author{Sayan Saha}
\address{Department of Mathematics, Indian Institute of Technology Bhilai, Durg, 491002, Chhattisgarh, India}
\email{sayans@iitbhilai.ac.in}

\author{Arnab Patra} 
\address{Department of Mathematics, Indian Institute of Technology Bhilai, Durg, 491002, Chhattisgarh, India}
\email{arnabp@iitbhilai.ac.in}

\subjclass[2020]{Primary 47A10, 47B37, 46B26, 47B01}
\keywords{Convolution operator, Spectrum, Fine spectrum, Non-separable space.}

\begin{document}

\begin{abstract}
This paper provides a complete characterization of the spectrum, point spectrum, continuous spectrum, and residual spectrum of bounded convolution operators acting on the non-separable space $\sigma_\infty$, the dual of the Hahn sequence space. Because standard duality and separability-based techniques fail in this setting, a linear functional approach is employed with the fine spectral decomposition of the unilateral shift operator. Furthermore, as a major application of this methodology, we establish a complete description of the fine spectrum for operator polynomials acting on general sequence spaces without requiring separability. This result provides a unified framework that subsumes classical spectral determinations for general banded triangular matrices across various sequence spaces and enables the explicit computation of fine spectrum on non-separable spaces.
\end{abstract}

	\maketitle

\section{Introduction}\label{sec1}
Let $\mathbb{N}_{0} = \mathbb{N} \cup \{0\}$. The convolution of two complex sequences $a=(a_{n})_{n=0}^\infty$ and $b=(b_{n})_{n=0}^\infty$ in $\mathbb{C}^{\mathbb{N}_{0}}$ corresponds to the Cauchy product of their formal power series $f_{a}(z) = \sum_{n=0}^{\infty}a_{n}z^n$ and $f_{b}(z)=\sum_{n=0}^{\infty}b_{n}z^n$. The resulting convolution sequence $a*b$ has $n$-th term:
\[
(a*b)_{n} = \sum_{k=0}^{n}a_{n-k}b_{k}, \quad n \in \mathbb{N}_0.
\]
For a fixed sequence $a$, the associated convolution operator $T_{a}: \mathbb{C}^{\mathbb{N}_{0}} \to \mathbb{C}^{\mathbb{N}_{0}}$ is defined by $T_{a}(x) := a*x.$ With respect to the cannonical basis of $\mathbb{C}^{\mathbb{N}_{0}}$, $T_a$ can be uniquely represented as an infinite lower triangular Toeplitz matrix:
\[
T_a = \begin{pmatrix}
a_0 & 0 & 0 & 0 & \dots \\
a_1 & a_0 & 0 & 0 & \dots \\
a_2 & a_1 & a_0 & 0 & \dots \\
a_3 & a_2 & a_1 & a_0 & \dots \\
\vdots & \vdots & \vdots & \vdots & \ddots
\end{pmatrix}.
\]\label{MR}
When $a=(a_0, a_1, \dots, a_n, 0, 0, \dots)$ has finite support, $T_a$ reduces to a lower triangular $(n+1)$-banded matrix operator.

The spectral theory of convolution operators and lower triangular banded matrices on classical sequence spaces has been a vibrant area of research. For two-banded operators $B(r,s)$ generated by $a=(r,s,0,0, \dots)$, fine spectral classifications were established on $c_0$ and $c$ by Altay and Ba\c{s}ar \cite{altay2005fine}, on $\ell^1$ and $bv$ by Furkan et al. \cite{furkan2006fine}, and on $\ell^p$ and $bv_{p}$ ($1<p< \infty$) by Bilgi\c{c} and Furkan \cite{bilgicc2008fine}. Subsequent generalizations addressed triple-banded operators $B(r,s,t)$ on $c_0, c$ \cite{furkan2007fine}, and $\ell^p, bv_p$ $(1<p<\infty)$ \cite{furkan2010fine}. More general $(n+1)$-banded operators generated by finite sequences $a=(a_{0}, a_{1}, \dots , a_{n}, 0 ,0, \dots )$ were analyzed on $c, c_{0}$ by Altun \cite{altun2011fine}, and on $c_{0}, c, \ell^{p}$, and $bv_{p}$ ($1 \leq p < \infty$) by Birbonshi and Srivastava \cite{birbonshi2017some}. More recently, the boundedness and spectral properties of convolution operators have been explored on Ces\`aro sequence spaces \cite{ricker2019convolution}, dual Ces\`aro sequence spaces \cite{curbera2023convolution}, and Fr\'echet sequence spaces \cite{ricker2019convolutionF}. However, fine spectral characterizations of such operators acting on non-separable sequence spaces, such as $\ell^{\infty}$, remain largly unexplored in the literature primarily due to unavailability of the duality-based functional analytic tools.

Recall that for a bounded linear operator $T \in \mathscr{B}(X)$ on a complex Banach space $X$, the resolvent set is $\rho(T,X) := \{\lambda \in \mathbb{C} : T-\lambda I \text{ is bijective with bounded inverse}\}$, and the spectrum is $\sigma(T,X) := \mathbb{C} \setminus \rho(T,X)$. Let $\mathscr{N}(T)$ and $\mathscr{R}(T)$ denote the null space and range space of $T$. The spectrum $\sigma(T,X)$ is partitioned into three mutually disjoint subsets:
\begin{enumerate}
    \item[(i)] Point spectrum $\sigma_{p}(T,X)$: the set of $\lambda \in \mathbb{C}$ such that $T-\lambda I$ is not injective ($\mathscr{N}(T-\lambda I) \neq \{0\}$);
    \item[(ii)] Continuous spectrum $\sigma_{c}(T,X)$: the set of $\lambda \in \mathbb{C}$ such that $T-\lambda I$ is injective, $\mathscr{R}(T-\lambda I)$ is dense in $X$, but $T-\lambda I$ is not surjective;
    \item[(iii)] Residual spectrum $\sigma_{r}(T,X)$: the set of $\lambda \in \mathbb{C}$ such that $T-\lambda I$ is injective, but $\mathscr{R}(T-\lambda I)$ is not dense in $X$.
\end{enumerate}
The decomposition of the spectrum into these three parts is referred to as the fine spectrum of the operator.

The main purpose of this paper is to investigate the fine spectrum of bounded convolution operators on the non-separable sequence space $\sigma_{\infty}$, defined as the dual of the Hahn space $h$:
\[ 
\sigma_{\infty} := \Bigl\{x \in \mathbb{C}^{\mathbb{N}_{0}} : \|x\|_{\sigma_{\infty}} = \sup_{n\geq 0}\frac{1}{n+1} \Big|\sum_{k=0}^{n}x_{k}\Big| < \infty \Bigr\}.
\]

We construct an algebraic framework based on linear functional representations and the fine spectral decomposition of the fundamental unilateral shift operator on $\sigma_\infty$. Furthermore, as an independent structural result, we establish that for any arbitrary bounded linear operator $T$ acting on complex Banach sequence spaces, whose fine spectrum is known, the complete fine spectrum of the polynomial operator $p(T)$ can be explicitly derived. Applying this general polynomial framework yields a unified setting that subsumes prior studies on banded triangular matrices across classical sequence spaces, while simultaneously extending these fine spectral subdivisions to non-separable spaces such as $\ell^{\infty}$ and $\sigma_\infty$. Suitable concrete examples for convolution operators and banded lower triangular matrices are provided alongside figures to illustrate the underlying spectral geometry.

The rest of the paper is organized as follows. In Section~\ref{sec2}, we establish key structural and topological properties of the dual Hahn sequence space $\sigma_\infty$. In Section~\ref{sec3}, we derive the complete fine spectral decomposition of the unilateral shift operator on $\sigma_\infty$. Section~\ref{sec4} leverages the shift operator analysis to determine the fine spectrum of discrete convolution operators $T_a$ on $\sigma_\infty$. Finally, Section~\ref{sec5} establishes the general theory for operator polynomials $p(T)$ acting on arbitrary Banach sequence spaces (including non-separable spaces), applies it to triangular banded matrices.
	
	

\section{The Sequence Space \texorpdfstring{$\sigma_\infty$}{sigma infinity}} \label{sec2}

The Hahn sequence space $h$, introduced by Hahn \cite{hahn1922folgen}, is defined as
\[
h:= \Biggl\{x\in \mathbb{C}^{\mathbb{N}_{0}} : \sum_{n=0}^{\infty} (n+1)|x_{n}-x_{n+1}|<\infty \quad \text{and} \quad \lim_{n\to\infty}x_n=0\Biggr\}.
\]
Hahn proved that $h$ is a Banach space under the norm
\[
\|x\|_{h}= \sum_{n=0}^{\infty} (n+1)|x_{n}-x_{n+1}| + \sup_{n\ge 0}|x_{n}|,
\]
and that its $\beta$-dual is isomorphic to the sequence space
\[
\sigma_{\infty}:= \Biggl\{ x \in \mathbb{C}^{\mathbb{N}_{0}} : \|x\|_{\sigma_{\infty}}= \sup_{n\ge 0}\frac{1}{n+1}\Big|\sum_{k=0}^{n}x_{k}\Big|< \infty \Biggr\}.
\]
Goes and Goes \cite[Theorem 3.7]{goes1970sequences} later established that the continuous dual $h'$ is also isomorphic to $\sigma_{\infty}$. Equipped with the norm $\|\cdot\|_{\sigma_{\infty}}$, the space $\sigma_{\infty}$ is a non-separable Banach space \cite[Lemma 3.3]{saha2026convolution}.

For any $x = (x_k)_{k=0}^{\infty} \in \mathbb{C}^{\mathbb{N}_0}$, we denote its sequence of partial sums by $S_n(x) := \sum_{k=0}^n x_k$ for each $n \in \mathbb{N}_0$, so that $\|x\|_{\sigma_\infty} = \sup_{n \ge 0} \frac{|S_n(x)|}{n+1}$. The following key inequality bounds the individual sequence entries in terms of the $\sigma_\infty$-norm.

\begin{lemma}\label{MxSI}
Let $x=(x_{n})_{n=0}^\infty \in \sigma_{\infty}$. Then for each $n\in \mathbb{N}_{0}$,
\[
|x_{n}| \leq (2n+1)\|x\|_{\sigma_{\infty}}.
\]
\end{lemma}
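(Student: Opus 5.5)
The plan is to write each entry as a difference of consecutive partial sums and bound each partial sum using the definition of the norm. For $n \ge 1$ we have $x_n = S_n(x) - S_{n-1}(x)$. For $n = 0$ we have $x_0 = S_0(x)$.

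By the definition of $\|\cdot\|_{\sigma_\infty}$, for every $m \in \mathbb{N}_0$,
\[
|S_m(x)| \le (m+1)\|x\|_{\sigma_\infty}.
\]

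\textbf{Case $n = 0$.} This gives directly $|x_0| = |S_0(x)| \le \|x\|_{\sigma_\infty}$, which matches the bound $(2\cdot 0+1)\|x\|_{\sigma_\infty}$.

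\textbf{Case $n \ge 1$.} The triangle inequality gives
\[
|x_n| \le |S_n(x)| + |S_{n-1}(x)| \le (n+1)\|x\|_{\sigma_\infty} + n\|x\|_{\sigma_\infty} = (2n+1)\|x\|_{\sigma_\infty}.
\]

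This is the whole argument, and there is no real obstacle. The only care needed is to treat $n = 0$ separately, or equivalently to adopt the convention $S_{-1}(x) = 0$, so that the two cases merge into a single line.
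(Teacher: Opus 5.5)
Your proof is correct and matches the paper's argument exactly: bound $|S_m(x)| \le (m+1)\|x\|_{\sigma_\infty}$ from the definition of the norm, treat $n=0$ directly, and for $n \ge 1$ apply the triangle inequality to $x_n = S_n(x) - S_{n-1}(x)$. Nothing is missing.
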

\begin{proof}
By definition of $\|\cdot\|_{\sigma_{\infty}}$, we have $|S_k(x)| \le (k+1)\|x\|_{\sigma_{\infty}}$ for all $k \in \mathbb{N}_0$. For $n=0$, \[|x_0| = |S_0(x)| \le \|x\|_{\sigma_\infty}. \] 
For any $n \ge 1$, writing $x_n = S_n(x) - S_{n-1}(x)$ yields
\[
|x_{n}| \le |S_n(x)| + |S_{n-1}(x)| \le (n+1)\|x\|_{\sigma_{\infty}} + n\|x\|_{\sigma_{\infty}} = (2n+1)\|x\|_{\sigma_{\infty}},
\]
which completes the proof.
\end{proof}

Before examining the structural properties of $\sigma_\infty$, we recall some standard sequence space terminologies. A sequence space $X$ endowed with a linear topology is called a $K$-space if every coordinate projection map $p_{n}:X \to \mathbb{C}$, $p_{n}(x)=x_{n}$, is continuous for all $n\in \mathbb{N}_{0}$. A complete metric $K$-space is called an $FK$-space, and a normable $FK$-space is called a $BK$-space. Let $c_{00}$ denote the set of all finitely supported complex sequences. Given a sequence space $X \supseteq c_{00}$, the $n$-th section of $x=(x_{k})_{k=0}^{\infty} \in X$ is $x^{[n]}=\sum_{k=0}^{n}x_{k}e_{k}$, where $e_{k}=(\delta_{k,j})_{j=0}^{\infty}$ and $\delta_{k,j}$ denotes the Kronecker delta. The space $X$ is said to possess the:
\begin{itemize}
    \item $AK$ property if $\lim_{n \to \infty}\|x-x^{[n]}\|_{X}=0$ for all $x \in X$;
    \item $AB$ property if $\sup_{n \geq 0}\|x^{[n]}\|_{X} < \infty$ for all $x \in X$;
    \item $AD$ property if $c_{00}$ is dense in $X$ (i.e., $\overline{c_{00}} = X$).
\end{itemize}
Furthermore, $X$ is called $\beta$-perfect if $X^{\beta\beta}=X$, where $X^\beta$ is the $\beta$-dual of $X$; and $X$ is solid if and only if $\tilde{X} \subseteq X$ where 
\[
        \Tilde{X}:=\Bigl\{ (u_{n})_{n=0}^{\infty}\in \mathbb{C}^{\mathbb{N}_{0}}: \textrm{There exists} \ (x_{n})_{n=0}^{\infty} \in X \ \textrm{such that}\ |u_{n}| \leq |x_{n}| \ \textrm{for all} \ n \in \mathbb{N}_{0} \Bigr\}. 
    \]
For more detail refer to \cite{grosse2001l1,wilansky2000summability}.\\

The main topological and structural properties of $\sigma_\infty$ are summarized in the following proposition.

\begin{proposition}\label{prop:sigma_properties}
The sequence space $\sigma_\infty$ satisfies the following properties:
\begin{enumerate}
    \item[\rm (i)] It is non-separable.
    \item[\rm (ii)] It is a $BK$-space, and hence an $FK$-space and $K$-space.
    \item[\rm (iii)] It satisfies the $AB$ property, but fails both the $AK$ and $AD$ properties.
    \item[\rm (iv)] It is a $\beta$-perfect sequence space.  
    \item[\rm (v)] It is not solid.
\end{enumerate}
\end{proposition}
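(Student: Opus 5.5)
I will prove the five items separately, leaning on the elementary estimate of Lemma~\ref{MxSI} and on the partial-sum map $S:x\mapsto (S_n(x))_{n\ge0}$. The map $x\mapsto \bigl(S_n(x)/(n+1)\bigr)_{n}$ is a linear isometry of $\sigma_\infty$ onto $\ell^\infty$: it is injective, and given $y\in\ell^\infty$ we recover $x_n=(n+1)y_n-ny_{n-1}$. This gives completeness and settles (i) as well.

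For (i), non-separability transfers from $\ell^\infty$ through the isometric isomorphism; alternatively I will cite \cite[Lemma 3.3]{saha2026convolution}. For (ii), completeness follows from the isometry. Continuity of each $p_n$ is exactly Lemma~\ref{MxSI}, since $|p_n(x)|\le(2n+1)\|x\|_{\sigma_\infty}$. Hence $\sigma_\infty$ is a $BK$-space, and therefore an $FK$- and $K$-space.

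For (iii), the $AB$ property follows from the estimate $S_k(x^{[n]})=S_{\min(k,n)}(x)$. This gives $|S_k(x^{[n]})|/(k+1)\le |S_{\min(k,n)}(x)|/(\min(k,n)+1)\le\|x\|_{\sigma_\infty}$, so $\|x^{[n]}\|_{\sigma_\infty}\le\|x\|_{\sigma_\infty}$. To see that $AD$ fails, I note that under the isometry $c_{00}$ corresponds to sequences $y$ with $y_k=S_n/(k+1)$ for $k\ge n$, that is, sequences with $(k+1)y_k$ eventually constant. Every such $y$ lies in $c_0$, so the closure of $c_{00}$ is contained in the preimage of $c_0$, a proper closed subspace. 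Concretely, $x=e_0$ shifted into $x=(1,1,1,\dots)$ gives $y\equiv1$, which lies at distance at least $1$ from $c_{00}$. Since $AK$ implies $AD$, $AK$ fails too.

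For (iv), I will use the known fact that $h^\beta=\sigma_\infty$ (Hahn, recalled above). Then $\sigma_\infty^{\beta}=h^{\beta\beta}$, and I need $h^{\beta\beta}=h$. My preferred route is the general theorem that a $BK$-space with $AK$ satisfies $X^{\beta}=X'$ and that $X^{\beta\beta}$ is the closure-type hull; it is cleaner to show $\sigma_\infty^\beta=h$ directly. One inclusion, $h\subseteq\sigma_\infty^\beta$, comes from Abel summation: $\sum a_kx_k=\sum_k (a_k-a_{k+1})S_k(x)$ plus a boundary term $a_nS_n(x)$. The boundary term tends to $0$ because $h\subset\{a:na_n\to0\}$, which itself follows from $\sum (k+1)|a_k-a_{k+1}|<\infty$. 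For the converse I test against suitable $x$, for instance $x=(1,1,\dots)$, which forces convergence of $\sum a_k$, and against sign choices of the partial sums $S_k=(k+1)\varepsilon_k$. This yields $\sum(k+1)|a_k-a_{k+1}|<\infty$ and $a_k\to0$. Then $\sigma_\infty^{\beta\beta}=h^\beta=\sigma_\infty$. I expect this converse inclusion to be the main obstacle, since it requires handling the boundary term carefully while choosing $x$ via arbitrary bounded sequences $y\in\ell^\infty$.

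For (v), I take $x_n=(-1)^n(n+1)$. Its partial sums are bounded by roughly $n/2+1$ in absolute value, so $x\in\sigma_\infty$. Now set $u_n=n+1=|x_n|$. Then $S_n(u)/(n+1)=(n+2)/2$ is unbounded, so $u\in\tilde{\sigma}_\infty\setminus\sigma_\infty$, and $\sigma_\infty$ is not solid.
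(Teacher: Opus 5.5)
Your proposal is correct except in (iv). There, the inclusion $\sigma_\infty^\beta\subseteq h$ is only sketched, and as written the sketch does not go through.

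\textbf{Where you differ from the paper (fine).} Your isometry $x\mapsto\bigl(S_n(x)/(n+1)\bigr)_{n\ge0}$ of $\sigma_\infty$ onto $\ell^\infty$ is a different and more economical route than the paper's.
\begin{itemize}
\item It gives completeness and non-separability at once. The paper instead cites \cite[Lemma 3.3]{saha2026convolution} for (i) and takes completeness for granted in (ii).
\item Your observation that $c_{00}$ is mapped into $c_0$ settles the failure of $AD$ structurally. The failure of $AK$ then follows because $AK$ implies $AD$. The paper builds two separate explicit counterexamples.
\item Your $AB$ argument and your (v) agree with the paper.
\end{itemize}
A minor point in the forward inclusion $h\subseteq\sigma_\infty^\beta$: the bound $n|a_n|\le\sum_{k\ge n}(k+1)|a_k-a_{k+1}|$ needs $a_n\to0$ as well as the weighted variation condition. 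Both are part of the definition of $h$, so that direction stands.

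\textbf{The gap in (iv).} You need $\sigma_\infty^\beta\subseteq h$, which you flag as the main obstacle but do not prove. The paper avoids this by invoking \cite[Theorem 3.5]{saha2026convolution}; your self-contained route cannot. Write $x_k=(k+1)y_k-ky_{k-1}$ with $y\in\ell^\infty$. The partial sums are
\[
\sum_{k=0}^{n}a_kx_k=\sum_{k=0}^{n-1}(k+1)(a_k-a_{k+1})y_k+(n+1)a_ny_n .
\]
Testing with $y\equiv1$ or with unimodular $y_k$ does not by itself give $\sum_k(k+1)|a_k-a_{k+1}|<\infty$ or $a_k\to0$. The boundary term $(n+1)a_ny_n$ is a priori unbounded and could offset the growth of the first sum. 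Your sketch states the conclusion without controlling this term.

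\textbf{How to close it.} The missing step is short.
\begin{itemize}
\item Convergence of $\sum_k a_kx_k$ forces $a_kx_k\to0$.
\item Apply this to $x_k=(-1)^k(k+1)$, which lies in $\sigma_\infty$ by your part (v). This gives $(k+1)a_k\to0$, hence $a_k\to0$.
\item The boundary term therefore tends to $0$ for every bounded $y$.
\item Now choose $|y_k|\le1$ with $(a_k-a_{k+1})y_k=|a_k-a_{k+1}|$. This gives $\sum_k(k+1)|a_k-a_{k+1}|<\infty$.
\end{itemize}
Thus $\sigma_\infty^\beta=h$, and so $\sigma_\infty^{\beta\beta}=h^\beta=\sigma_\infty$. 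With this step inserted, your proof of (iv) is more informative than the paper's, since it identifies $\sigma_\infty^\beta$ explicitly.
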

\begin{proof}
\begin{enumerate}
    \item[\rm (i)] This follows from \cite[Lemma 3.3]{saha2026convolution} with weight sequence $d_{n}=n+1$.
    
    \item[\rm (ii)] By Lemma~\ref{MxSI}, $|p_n(x)| = |x_n| \le (2n+1)\|x\|_{\sigma_\infty}$ for every $n \in \mathbb{N}_0$, which establishes that each coordinate projection map $p_n$ is a bounded linear functional on $\sigma_\infty$. Since $\sigma_\infty$ is a Banach space with continuous coordinate projections, it is a $BK$-space (see also \cite[Remark 2.2]{malkowsky2021compact}).

    \item[\rm (iii)] To verify the $AB$ property, take any $x \in \sigma_\infty$ and $n \in \mathbb{N}_0$. For $m \le n$, $S_m(x^{[n]}) = S_m(x)$, while for $m > n$, $S_m(x^{[n]}) = S_n(x)$. Hence, for all $m \ge 0$,

    \begin{align*}
    \frac{|S_m(x^{[n]})|}{m+1} &\le \begin{cases}
        \frac{|S_m(x)|}{m+1}, \quad \mathrm{for}\ m\le n; \\
        \frac{|S_n(x)|}{m+1}, \quad  \mathrm{for}\ m > n;
    \end{cases}\\ 
     &\le \begin{cases}
        \frac{|S_m(x)|}{m+1}, \quad \mathrm{for}\ m\le n; \\
        \frac{n+1}{m+1}\frac{|S_n(x)|}{n+1}, \quad  \mathrm{for}\ m > n;
    \end{cases}\\
    &\le \|x\|_{\sigma_\infty}.
    \end{align*}
    Taking the supremum over $m \ge 0$ yields $\|x^{[n]}\|_{\sigma_\infty} \le \|x\|_{\sigma_\infty}$, so $\sup_{n \ge 0} \|x^{[n]}\|_{\sigma_\infty} \le \|x\|_{\sigma_\infty} < \infty$, proving $AB$.

    To see that $AK$ fails, consider $x \in \mathbb{C}^{\mathbb{N}_{0}}$ defined by
    \[
    x_{n} = \begin{cases}
    -n, \quad \text{when}\ n \ \text{is even};\\
    n+1, \quad \text{when}\ n \ \text{is odd}.
    \end{cases}
    \]
    Direct calculation gives $S_{2k}(x) = 0$ and $S_{2k+1}(x) = 2k+2$, so \[\frac{|S_n(x)|}{n+1} = \begin{cases}
        0, \quad \text{when}\ n \ \text{is even};\\
        1, \quad \text{when} \ n \ \text{is odd};
    \end{cases}\] implying $\|x\|_{\sigma_{\infty}} = 1$ and $x \in \sigma_{\infty}$. However, evaluating the norm of $x - x^{[2n]}$ yields
    \[ \big\| x - x^{[2n]} \big\|_{\sigma_{\infty}} = \big\| \big(  0, 0, \ldots, 0, (2n+2), -(2n+2), (2n+4), \ldots \big)  \big\|_{\sigma_{\infty}} = 1. \] 
    Therefore $\big\| x - x^{[2n]} \big\|_{\sigma_{\infty}} \not\to 0$ as $n \to \infty$ and hence $\sigma_{\infty}$ does not possess the $AK$ property.\\

    To show that $\sigma_{\infty}$ does not have the $AD$-property, consider another element $x' \in \mathbb{C}^{\mathbb{N}_{0}}$ such that
            \[
            x'=(1,-2,3,\dots, (-1)^n(n+1), \dots).
            \]
            Then, $\|x\|_{\sigma_{\infty}}=1$. Now, if possible let, for $\epsilon= \frac{1}{8}$, there exists a $y \in c_{00}$, such that, $\|x' -y\|_{\sigma_{\infty}}< \frac{1}{8}$. As $y\in c_{00}$, there exists $m\in \mathbb{N}$ such that $y_{k}=0$ for all $k\geq m$. Now, $\|x' -y\|_{\sigma_{\infty}}< \frac{1}{8}$ implies 
            \[
            \frac{1}{n+1}\Big|\sum_{k=0}^{n}(x_{k}'-y_{k})\Big| < \frac{1}{8} \ \textrm{for all}\ n\in \mathbb{N}_{0}.
            \]
            Choosing $n=m$ we have
            \begin{align*}
                &\frac{1}{m+1}\Big|\sum_{k=0}^{m}(x_{k}'-y_{k})\Big| < \frac{1}{8}\\
                \implies & 1-\frac{m}{m+1}\frac{1}{m}\Big|\sum_{k=0}^{m-1}(x_{k}'-y_{k})\Big| < \frac{1}{8}\\
                \implies & 1-\frac{1}{8} < \frac{m}{m+1}\frac{1}{m}\Big|\sum_{k=0}^{m-1}(x_{k}'-y_{k})\Big| < \frac{m}{m+1} \frac{1}{8}\\
                \implies &\frac{7}{8} < \frac{m}{8(m+1)},
            \end{align*}
            which is not possible. Hence, $\sigma_{\infty}$ does not have the $AD$-property.

    \item[\rm (iv)] Since $h^{\beta} = \sigma_{\infty}$, the result follows directly from \cite[Theorem 3.5]{saha2026convolution} by setting $d_{n}=n+1$.

    \item[\rm (v)] Consider $x = \bigl((-1)^{n}(n+1)\bigr)_{n=0}^\infty$ and $u = (n+1)_{n=0}^\infty$. Clearly $|u_{k}| \le |x_{k}| = k+1$ for all $k \in \mathbb{N}_{0}$. As shown in part (iii), $x \in \sigma_{\infty}$ with $\|x\|_{\sigma_\infty} = 1$. However, $S_n(u) = \frac{(n+1)(n+2)}{2}$, so $\frac{S_n(u)}{n+1} = \frac{n+2}{2} \to \infty$ as $n \to \infty$, meaning $u \notin \sigma_{\infty}$. Thus $\sigma_{\infty}$ is not solid.
\end{enumerate}
\end{proof}

\section{Fine Spectrum of the Unilateral Shift Operator}\label{sec3}

In this section, we establish the complete fine spectral decomposition of the unilateral (right) shift operator $R$ on the non-separable sequence space $\sigma_{\infty}$. For any sequence $x=(x_{0},x_{1},x_{2},\ldots)\in \sigma_{\infty}$, the right shift operator $R:\sigma_{\infty}\to\sigma_{\infty}$ is defined by
\[
R(x):=\bigl(0,x_{0},x_{1},x_{2},\dots\bigr).
\]
The operator $R$ plays a fundamental role in our analysis, serving as the elementary building block for general convolution operators $T_a$. We begin by establishing the boundedness and spectrum of $R$, followed by its fine spectral subdivisions.

	\begin{proposition}
		The right shift operator $R$ is a bounded linear operator on $\sigma_{\infty}$ with norm 1, i.e., $\big\| R \big\|_{\mathscr{B}(\sigma_{\infty})} = 1$.
	\end{proposition}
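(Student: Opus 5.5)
Linearity of $R$ is immediate, so the plan is to compute the partial sums of $Rx$ in terms of those of $x$. For $x \in \sigma_\infty$ we have $S_0(Rx) = 0$ and, for every $n \ge 1$,
\[
S_n(Rx) = \sum_{k=1}^{n} x_{k-1} = S_{n-1}(x).
\]
Hence, for $n \ge 1$,
\[
\frac{|S_n(Rx)|}{n+1} = \frac{n}{n+1}\cdot\frac{|S_{n-1}(x)|}{n} \le \frac{n}{n+1}\,\|x\|_{\sigma_\infty} \le \|x\|_{\sigma_\infty},
\]
while the $n=0$ term vanishes. Taking the supremum over $n$ gives $Rx \in \sigma_\infty$ with $\|Rx\|_{\sigma_\infty} \le \|x\|_{\sigma_\infty}$. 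Thus $R$ maps $\sigma_\infty$ into itself and $\|R\|_{\mathscr{B}(\sigma_\infty)} \le 1$.

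For the reverse inequality $\|R\| \ge 1$, the factor $\frac{n}{n+1}$ shows that a single test vector will only give a ratio strictly below $1$. We therefore need either a vector whose normalized partial sums achieve the supremum only asymptotically, or a sequence of test vectors. The natural choice is $x = (1,1,1,\dots)$. Here $S_n(x) = n+1$, so $\|x\|_{\sigma_\infty} = 1$. Also $S_n(Rx) = n$, so
\[
\|Rx\|_{\sigma_\infty} = \sup_{n \ge 0} \frac{n}{n+1} = 1.
\]
This already gives $\|R\| \ge 1$. As an alternative, one can take $x^{(m)} = e_m$: then $\|e_m\|_{\sigma_\infty} = 1$ (attained at $n=m$), and $Re_m = e_{m+1}$ also has norm $1$.

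The only step requiring care is this lower bound: the upper-bound estimate is strict at every fixed index, so the test vector must attain the supremum in the limit. The constant sequence (or $e_m$) does this, since the norm of its shift is a supremum approached as $n \to \infty$ rather than attained at any finite index. Combining the two bounds gives $\|R\|_{\mathscr{B}(\sigma_\infty)} = 1$.
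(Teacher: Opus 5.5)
Your argument is correct and follows the paper's proof: you use the same estimate $\frac{|S_n(Rx)|}{n+1}=\frac{n}{n+1}\frac{|S_{n-1}(x)|}{n}\le\|x\|_{\sigma_\infty}$ to get $\|R\|\le 1$, and the same constant sequence $(1,1,1,\dots)$ for the lower bound. One slip, in your optional alternative only: $\|e_m\|_{\sigma_\infty}=\sup_{n\ge m}\frac{1}{n+1}=\frac{1}{m+1}$, not $1$, and $\|Re_m\|_{\sigma_\infty}=\frac{1}{m+2}$. So the vectors $e_m$ give the ratio $\frac{m+1}{m+2}\to 1$; this still works as a sequence of test vectors, but not for the reason you state.
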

	\begin{proof}
		The linearity of $R$ is obvious. To prove the boundedness of $R$, let $x = \big(x_{0}, x_{1}, x_{2}, \ldots\big)$ be an arbitrary element of $\sigma_{\infty}$. Then
		\begin{align*}
			\big\| R(x) \big\|_{\sigma_{\infty}} 
			&= \big\| \big( 0,x_{0}, x_{1}, x_{2}, \cdots\big) \big\|_{\sigma_{\infty}} \\
			&= \underset{n \geq 1}{\sup}\frac{1}{n+1} \Bigg|\sum_{k=0}^{n-1}x_{k}\Bigg| \\
			&= \underset{n \geq 1}{\sup}\frac{n}{n+1} \frac{1}{n} \Bigg|\sum_{k=0}^{n-1}x_{k}\Bigg| \\
			& \leq \underset{n \geq 1}{\sup} \frac{n}{n+1} \ \  \underset{n \geq 1}{\sup}  \frac{1}{n} \Bigg|\sum_{k=0}^{n-1}x_{k}\Bigg| \\
			&= \|x\|_{\sigma_{\infty}}.
		\end{align*}
		Therefore, 	$\big\| R \big\|_{\mathscr{B}(\sigma_{\infty})} \leq 1$.	On the other hand $x= \big(1, 1, 1, \ldots\big) \in \sigma_{\infty}$ with $\|x\|_{\sigma_{\infty}}=1.$ Then $\big\|R(x)\big\|_{\sigma_{\infty}} = \underset{n}{\sup}\ \frac{n}{n+1} = 1$.
	\end{proof}

\begin{proposition}
The spectrum of the right shift operator $R$ on $\sigma_{\infty}$ is the closed unit disk, i.e.,
\[
\sigma(R, \sigma_{\infty}) = \overline{\mathbb{D}}.
\]
\end{proposition}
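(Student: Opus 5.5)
The plan is to prove two inclusions. Since $\|R\|_{\mathscr{B}(\sigma_\infty)}=1$ by the preceding proposition, the spectral radius of $R$ is at most $1$. This gives $\sigma(R,\sigma_\infty)\subseteq\overline{\mathbb{D}}$ at once. Equivalently, for $|\lambda|>1$ the Neumann series $(R-\lambda I)^{-1}=-\sum_{k\ge0}\lambda^{-k-1}R^k$ converges in operator norm.

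For the reverse inclusion I will first treat the open disk and then use closedness. Fix $0<|\lambda|<1$ and test surjectivity of $R-\lambda I$ at $e_0\in\sigma_\infty$. Solving $(R-\lambda I)x=e_0$ coordinatewise gives $-\lambda x_0=1$ and $x_{n-1}-\lambda x_n=0$ for $n\ge1$. The solution is forced to be $x_n=-\lambda^{-n-1}$, and it is the unique solution in $\mathbb{C}^{\mathbb{N}_0}$ because the system is lower triangular with nonzero diagonal $-\lambda$. Its partial sums are
\[
S_n(x)=-\lambda^{-1}\frac{\lambda^{-(n+1)}-1}{\lambda^{-1}-1},
\]
so $|S_n(x)|/(n+1)$ grows like $|\lambda|^{-n}/n\to\infty$. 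Hence $x\notin\sigma_\infty$, so $e_0\notin\mathscr{R}(R-\lambda I)$ and $\lambda\in\sigma(R,\sigma_\infty)$.

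For $\lambda=0$, the operator $R$ is not surjective, since $e_0\notin\mathscr{R}(R)$ because every $Rx$ has first coordinate $0$; thus $0\in\sigma(R,\sigma_\infty)$. This shows $\mathbb{D}\subseteq\sigma(R,\sigma_\infty)$. Since the spectrum is closed, it follows that $\overline{\mathbb{D}}\subseteq\sigma(R,\sigma_\infty)$.

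The only step needing care is the growth estimate for the partial sums, which shows the formal inverse leaves $\sigma_\infty$. This works because the growth is exponential against the linear weight $n+1$. The boundary circle needs no separate work here: it follows from closedness, and the finer analysis of $|\lambda|=1$ is deferred to the fine spectrum results.
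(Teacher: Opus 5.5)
Your proposal is correct and follows essentially the same route as the paper: the norm bound gives $\sigma(R,\sigma_\infty)\subseteq\overline{\mathbb{D}}$; for $\lambda\in\mathbb{D}$, the forced solution of $(R-\lambda I)x=\pm e_0$ (the paper uses $-e_0$) has exponentially growing partial sums and so lies outside $\sigma_\infty$; closedness of the spectrum then finishes. Your separate treatment of $\lambda=0$ and your remark on uniqueness of the triangular solution just make explicit what the paper covers with ``Clearly $\lambda\neq 0$'' and ``Solving the recurrence relation.''
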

\begin{proof}
Since $\|R\|_{\mathscr{B}(\sigma_{\infty})} = 1$, it follows immediately that $\sigma(R, \sigma_{\infty}) \subseteq \overline{\mathbb{D}}$.

Now let $\lambda \in \mathbb{D}$. We show that $R - \lambda I$ is not surjective on $\sigma_{\infty}$. Consider the sequence $y = (-1, 0, 0, \dots) \in \sigma_{\infty}$. Suppose there exists a sequence $x = (x_n)_{n=0}^{\infty} \in \sigma_{\infty}$ such that $(R - \lambda I)x = y$. This yields the system:
\[
-\lambda x_0 = -1 \quad \text{and} \quad x_{n-1} - \lambda x_n = 0 \quad \text{for all } n \ge 1.
\]
Clearly $\lambda \neq 0$. Solving the recurrence relation gives $x_n = \frac{1}{\lambda^{n+1}}$ for all $n \in \mathbb{N}_0$. Evaluating the norm of $x$ gives
\begin{align*}
    \|x\|_{\sigma_{\infty}} 
    &= \sup_{n \ge 0} \frac{1}{n+1} \left| \sum_{k=0}^{n} x_k \right| \\
    &= \sup_{n \ge 0} \frac{1}{n+1} \left| \sum_{k=0}^{n} \frac{1}{\lambda^{k+1}} \right| \\
    &= \frac{1}{|1 - \lambda|} \sup_{n \ge 0} \frac{1}{n+1} \left| \left( \frac{1}{\lambda} \right)^{n+1} - 1 \right|.
\end{align*}
Since $|\lambda| < 1$, we have $|\lambda|^{-1} > 1$, which implies $\|x\|_{\sigma_{\infty}} = \infty$, so $x \notin \sigma_{\infty}$.

Thus, $R - \lambda I$ is not surjective for any $\lambda \in \mathbb{D}$, which proves that $\mathbb{D} \subseteq \sigma(R, \sigma_{\infty})$. Since the spectrum of a bounded linear operator is closed, we obtain $\overline{\mathbb{D}} \subseteq \sigma(R, \sigma_{\infty}) \subseteq \overline{\mathbb{D}}$, establishing $\sigma(R, \sigma_{\infty}) = \overline{\mathbb{D}}$.
\end{proof}

Having identified the spectrum of the right shift operator, we next investigate its fine spectral decomposition. Since $(R-\lambda I)$ is injective for every $\lambda\in\mathbb{C}$, it follows immediately that
\[
\sigma_p(R,\sigma_\infty)=\emptyset.
\]
The following result shows that every spectral value of $R$ belongs to the residual spectrum; consequently, the continuous spectrum of $R$ is empty.

    \begin{theorem}\label{R_residual}
		The residual spectrum of the right shift operator $R$ on $\sigma_{\infty}$ is given by \[\sigma_{r}(R,\sigma_{\infty}) = \overline{\mathbb{D}}.\]
	\end{theorem}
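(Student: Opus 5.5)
By the discussion preceding Theorem~\ref{R_residual}, $R-\lambda I$ is injective for every $\lambda\in\mathbb{C}$, and $\sigma(R,\sigma_\infty)=\overline{\mathbb{D}}$. So it suffices to show that for each $\lambda\in\overline{\mathbb{D}}$ the range $\mathscr{R}(R-\lambda I)$ is not dense in $\sigma_\infty$. Since $\sigma_\infty$ is non-separable and fails $AD$ (Proposition~\ref{prop:sigma_properties}), I would not pass to an adjoint on a predual. Instead, for each $\lambda$ I would construct a nonzero bounded linear functional $f_\lambda$ on $\sigma_\infty$ that vanishes on $\mathscr{R}(R-\lambda I)$; by Hahn--Banach this is equivalent to non-density.

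\emph{Open disk.} Let $|\lambda|<1$ and put $y=(R-\lambda I)x$, so $y_0=-\lambda x_0$ and $y_n=x_{n-1}-\lambda x_n$ for $n\ge1$. Define $f_\lambda(y)=\sum_{n\ge0}\lambda^n y_n$. Abel summation gives $\sum_{n=0}^N\lambda^n y_n=\sum_{n=0}^{N-1}S_n(y)(\lambda^n-\lambda^{n+1})+\lambda^N S_N(y)$. Using $|S_n(y)|\le (n+1)\|y\|_{\sigma_\infty}$, the series converges and $|f_\lambda(y)|\le\bigl(|1-\lambda|\sum_n (n+1)|\lambda|^n\bigr)\|y\|_{\sigma_\infty}$, so $f_\lambda$ is bounded. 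It is nonzero because $f_\lambda(e_0)=1$. On the range the sum telescopes: $\sum_{n=0}^N\lambda^n y_n=-\lambda^{N+1}x_N$. By Lemma~\ref{MxSI}, $|x_N|\le(2N+1)\|x\|_{\sigma_\infty}$, so this tends to $0$, and hence $f_\lambda$ annihilates $\mathscr{R}(R-\lambda I)$. This settles $\mathbb{D}\subseteq\sigma_r$.

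\emph{Unit circle.} Let $|\lambda|=1$. Here the series $\sum\lambda^n y_n$ need not converge on $\sigma_\infty$, so I would replace it by a generalized limit. The telescoping identity $\sum_{k\le n}\lambda^k y_k=-\lambda^{n+1}x_n$ still holds on the range. Dividing by $\lambda^{n+1}$ and averaging gives
\[
\frac{1}{(N+1)^2}\sum_{n=0}^{N}\lambda^{-(n+1)}\sum_{k=0}^{n}\lambda^k y_k=-\frac{S_N(x)}{(N+1)^2}\longrightarrow 0 .
\]
I would therefore set $f_\lambda(y)=\mathrm{LIM}_N\,\psi_N(y)$, where $\psi_N$ is the left-hand expression (with the normalization adjusted if necessary) and $\mathrm{LIM}$ is a Banach limit. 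By construction $f_\lambda$ vanishes on the range; it remains to show that $f_\lambda$ is bounded and nonzero.

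\emph{Main obstacle.} These two requirements pull against each other. The crude Abel estimate only gives $|\sum_{k\le n}\lambda^k y_k|=O(n^2)\|y\|_{\sigma_\infty}$, which forces a normalization like $(N+1)^{-3}$, and with that normalization simple test vectors such as $e_0$ give limit $0$. To resolve this I would treat $\lambda=1$ and $\lambda\ne1$ separately, each with its own explicit test vector $z$ for which $\sup_N|\psi_N(z)|$ can be bounded below.
\begin{itemize}
\item For $\lambda=1$: range elements satisfy $S_n(y)=-x_n$, so one can take the functional $\mathrm{LIM}_N\frac{1}{(N+1)^2}\sum_{n\le N}S_n(y)$. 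It is bounded because $|S_n(y)|\le(n+1)\|y\|_{\sigma_\infty}$, it kills the range because the averages reduce to $-S_N(x)/(N+1)^2\to0$, and it should be nonzero on $z=(1,1,1,\dots)$.
\item For $\lambda\neq1$: the analogous vectors are $z=(\bar\lambda^k)_k$ or $z=(\bar\lambda^k(k+1))_k$, which lie in $\sigma_\infty$ because their partial sums are $O(1)$, respectively $O(n)$, by Abel summation. The corresponding averages $\frac{1}{(N+1)^2}\sum_{n\le N}\lambda^{-(n+1)}\sum_{k\le n}\lambda^k y_k$ (or $\sum_{n\le N}\bar\lambda^{n}S_n(y)$) would need to be shown to have a nonzero generalized limit.
\end{itemize}
If the functional approach stalls on the circle, I would fall back on a direct distance estimate: exhibit a specific $z$ and show $\|z-(R-\lambda I)x\|_{\sigma_\infty}\ge c>0$ for all $x\in\sigma_\infty$, using the relation $\sum_{k\le n}\lambda^k y_k=-\lambda^{n+1}x_n$ together with Lemma~\ref{MxSI}.
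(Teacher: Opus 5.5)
\textbf{Verdict: genuine gap.} The case $|\lambda|=1$, $\lambda\neq1$ is left open, and the functionals you propose for it fail as written.

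Your argument for $|\lambda|<1$ is correct and is essentially the paper's Case 1. You bound $f_\lambda$ by Abel summation rather than Lemma~\ref{MxSI}, and you test on $e_0$ rather than $(1,1,1,\dots)$. Your functional for $\lambda=1$ also works:
\begin{itemize}
\item it is bounded, since $\bigl|\frac{1}{(N+1)^2}\sum_{n\le N}S_n(y)\bigr|\le\|y\|_{\sigma_\infty}$;
\item it kills the range, since $\sum_{n\le N}S_n((R-I)x)=-S_N(x)$;
\item it equals $\tfrac12$ on $(1,1,1,\dots)$.
\end{itemize}

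For $|\lambda|=1$, $\lambda\ne1$, there are three problems.
\begin{itemize}
\item Your diagnosis of the obstacle is off. Reordering the double sum gives
\[
\sum_{n=0}^{N}\bar\lambda^{\,n+1}\sum_{k=0}^{n}\lambda^{k}y_k=\sum_{k=0}^{N}\bar\lambda^{\,N+1-k}S_k(y),
\]
so $|\psi_N(y)|\le\frac{N+2}{2(N+1)}\|y\|_{\sigma_\infty}$ already with the $(N+1)^{-2}$ normalization. With $(N+1)^{-3}$ the functional is identically zero.
\item Even with $(N+1)^{-2}$ the Banach limit gives zero. Write $\psi_N(y)=\bar\lambda^{\,N+1}\phi_N(y)$ with $\phi_N(y):=\frac{1}{(N+1)^2}\sum_{k=0}^{N}\lambda^kS_k(y)$. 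Then $\phi_{N+1}(y)-\phi_N(y)=O(1/N)$, so shift invariance gives $\mathrm{LIM}_N\psi_N(y)=\bar\lambda\,\mathrm{LIM}_N\psi_N(y)$. Hence $f_\lambda\equiv0$.
\item Your alternative $\sum_{n\le N}\bar\lambda^{\,n}S_n(y)$ has the wrong conjugation and does not annihilate the range. The sum that telescopes is $\sum_{n\le N}\lambda^{n}S_n((R-\lambda I)x)=-\lambda^{N+1}S_N(x)$.
\end{itemize}

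The missing idea is exactly the reordering above, which is also the key estimate in the paper's Case 2. With it, set $f_\lambda:=\mathrm{LIM}_N\phi_N$. Then:
\begin{itemize}
\item $|f_\lambda(y)|\le\|y\|_{\sigma_\infty}$;
\item $f_\lambda$ vanishes on $\mathscr{R}(R-\lambda I)$, since $|\phi_N((R-\lambda I)x)|=|S_N(x)|/(N+1)^2\le\|x\|_{\sigma_\infty}/(N+1)$;
\item $f_\lambda(z)=\tfrac12$ for $z_k=(k+1)\bar\lambda^{k}-k\bar\lambda^{k-1}$, for which $S_k(z)=(k+1)\bar\lambda^{k}$.
\end{itemize}
This handles every $|\lambda|=1$ at once, including $\lambda=1$. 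You can also avoid Banach limits: the $\psi_N$ are uniformly bounded and tend to $0$ on the range, hence on its closure, while $|\psi_N(z)|\to\tfrac12$.

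The paper instead takes your fallback route with the same $z$. If $(R-\lambda I)b=z+\alpha$, then
\[
S_n(b)=-\frac{(n+1)(n+2)}{2}\bar\lambda^{\,n+1}-\sum_{t=0}^{n}\bar\lambda^{\,n+1-t}S_t(\alpha).
\]
The last sum has modulus at most $\frac{(n+1)(n+2)}{2}\|\alpha\|_{\sigma_\infty}$, so $b\notin\sigma_\infty$ whenever $\|\alpha\|_{\sigma_\infty}<1$. The paper uses radius $1/8$, and treats $\lambda=1$ separately with $(1000,1000,\dots)$.
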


\begin{proof}
Let $\lambda \in \overline{\mathbb{D}}$. The injectivity of $R - \lambda I$ implies that $\mathscr{N}(R - \lambda I) = \{0\}.$ The rest of the proof is divided into three cases covering the closed unit disk $\overline{\mathbb{D}}$.

\textbf{Case 1: $|\lambda| < 1$ ($\lambda \in \mathbb{D}$).} \\
For each $\lambda \in \mathbb{D}$, define a linear functional $f_{\lambda} : \sigma_{\infty} \to \mathbb{C}$ by
\[
f_{\lambda}(x) = \sum_{n=0}^{\infty} \lambda^n x_{n}, \quad x = (x_n) \in \sigma_\infty.
\]
By Lemma~\ref{MxSI}, every $x \in \sigma_{\infty}$ satisfies $|x_n| \le (2n+1)\|x\|_{\sigma_{\infty}}$ for all $n \in \mathbb{N}_0$. Consequently,
\[
|f_{\lambda}(x)| \le \sum_{n=0}^{\infty} |\lambda|^n |x_{n}| \le \|x\|_{\sigma_{\infty}} \sum_{n=0}^{\infty} (2n+1)|\lambda|^n.
\]
Since $\sum_{n=0}^{\infty} (2n+1)|\lambda|^n < \infty$ for $|\lambda| < 1$, $f_{\lambda}$ is a bounded linear functional on $\sigma_{\infty}$ with 
\[ \|f_\lambda\| \le \sum_{n=0}^{\infty} (2n+1)|\lambda|^n.\]

Next, for any $x \in \sigma_{\infty}$, let $y = (R - \lambda I)x$. The $n$-th component of $y$ is $y_n = x_{n-1} - \lambda x_n$ for $n \in \mathbb{N}_0$ with $x_{-1} = 0.$ Evaluating $f_\lambda$ on $y$ yields
\begin{align*}
f_{\lambda}(y) = \sum_{k=0}^{\infty} \lambda^k y_k &= \lim_{n\to\infty} \sum_{k=0}^{n} \lambda^k (x_{k-1} - \lambda x_k) \\
&= \lim_{n\to\infty} \left(-\lambda x_0 + (\lambda x_0 - \lambda^2 x_1) + \dots + (\lambda^n x_{n-1} - \lambda^{n+1} x_n)\right) \\
&= \lim_{n\to\infty} (-\lambda^{n+1} x_n).
\end{align*}
Applying $|x_n| \le (2n+1)\|x\|_{\sigma_{\infty}}$ for all $n \in \mathbb{N}_0$, we obtain
\[
|f_{\lambda}(y)| = \lim_{n\to\infty} |\lambda|^{n+1} |x_n| \le \|x\|_{\sigma_{\infty}} \lim_{n\to\infty} (2n+1)|\lambda|^{n+1} = 0.
\]
Thus, $f_{\lambda}(y) = 0$ for all $y \in \mathscr{R}(R - \lambda I)$. Continuous extension implies $f_{\lambda}(y) = 0$ on $\overline{\mathscr{R}(R - \lambda I)}$. However, for the constant sequence $e = (1, 1, 1, \dots) \in \sigma_\infty$, we have
\[
f_{\lambda}(e) = \sum_{n=0}^{\infty} \lambda^n = \frac{1}{1-\lambda} \neq 0, \qquad \lambda \in \mathbb{D}.
\]
Therefore, $f_\lambda \neq 0$, which proves that $\overline{\mathscr{R}(R - \lambda I)} \subseteq \mathscr{N}(f_\lambda) \subsetneq \sigma_{\infty}$. Hence, $\lambda \in \sigma_{r}(R, \sigma_{\infty})$.

\textbf{Case 2: $\lambda = e^{i\theta}$ with $\theta \in (0, 2\pi)$.} \\
Consider the complex sequence $y = (y_n)_{n=0}^\infty$ defined by $y_n = (n+1)\bar{\lambda}^n - n\bar{\lambda}^{n-1}$ for $n \in \mathbb{N}_0$. The partial sums of $y$ results to $\sum_{k=0}^{n} y_k = (n+1)\bar{\lambda}^n$, which implies
\[
\frac{1}{n+1}\left|\sum_{k=0}^{n} y_k\right| = |\overline{\lambda}|^n = 1 \implies \|y\|_{\sigma_\infty} = \sup_{n \ge 0} \frac{1}{n+1}\left|\sum_{k=0}^{n} y_k\right| = 1.
\]
Hence, $y \in \sigma_{\infty}$. We claim that $y \notin \overline{\mathscr{R}(R - \lambda I)}$. 

To establish this, let there exists $x \in \sigma_{\infty}$ such that $(R-\lambda I)x = y$. This gives, $x_{n} = - \frac{(n+1)(n+2)}{2} \bar{\lambda}^{n+1} + \frac{n(n+1)}{2} \bar{\lambda}^{n}$, for all $n \in \mathbb{N}_{0}$. But $\frac{1}{n+1}|\sum_{k=0}^{n}x_{k}| = \big| \frac{n+2}{2} \bar{\lambda}^{n+1} \big| = \frac{n+2}{2}$. This implies, $x \notin \sigma_{\infty}$. So, $y \notin \mathscr{R}(R- \lambda I)$.

Now we show that $\mathbb{B}(y, \frac{1}{8}) \cap \mathscr{R}(R - \lambda I) = \emptyset$, where $\mathbb{B}(y, r)$ denotes the open ball in $\sigma_\infty$ centered at $y$ with radius $r$. Suppose for contradiction that there exists $\zeta \in \mathbb{B}(y, \frac{1}{8}) \cap \mathscr{R}(R - \lambda I)$. Then $\zeta = y + \alpha = (R - \lambda I)b$ for some $b, \alpha \in \sigma_{\infty}$ with $\|\alpha\|_{\sigma_\infty} < \frac{1}{8}$. The relation $(R - \lambda I)b = y + \alpha$ leads to the explicit recurrence formula for $b_n$:
\[
b_n = -\frac{(n+1)(n+2)}{2}\bar{\lambda}^{n+1} + \frac{n(n+1)}{2}\bar{\lambda}^n + \sum_{k=0}^{n} \alpha_k \bar{\lambda}^{n+1-k}, \qquad n \in \mathbb{N}_0.
\]
Summing over $n$ yields
\[
\sum_{k=0}^{n} b_k = -\frac{(n+1)(n+2)}{2}\bar{\lambda}^{n+1} + \sum_{k=0}^{n} \sum_{t=0}^{k} \alpha_t \bar{\lambda}^{k+1-t}.
\]
Applying the reverse triangle inequality gives
\[
\frac{1}{n+1}\left|\sum_{k=0}^{n} b_k\right| \ge \frac{n+2}{2} - \frac{1}{n+1}\left|\sum_{k=0}^{n} \sum_{t=0}^{k} \alpha_t \bar{\lambda}^{k+1-t}\right|.
\]
Reordering the double summation yields
\begin{align*}
\frac{1}{n+1}\left|\sum_{k=0}^{n} \sum_{t=0}^{k} \alpha_t \bar{\lambda}^{k+1-t}\right| &= \frac{1}{n+1}\left| \bar{\lambda}\sum_{k=0}^{n}\alpha_k + \bar{\lambda}^2 \sum_{k=0}^{n-1}\alpha_k + \dots + \bar{\lambda}^{n+1}\alpha_0 \right| \\
\leq & \frac{1}{n+1}\Big|\sum_{k=0}^{n}\alpha_{k}\Big| + \frac{n}{n+1} \frac{1}{n}\Big|\sum_{k=0}^{n-1}\alpha_{k}\Big|+ \dots + \frac{1}{n+1} |\alpha_{0}| \\
			\leq & \big(1+ \frac{n}{n+1} + \dots + \frac{1}{n+1}\big) \|\alpha\|_{\sigma_{\infty}}\\
			= & \frac{n+2}{2} \|\alpha\|_{\sigma_{\infty}} < \frac{n+2}{16}.
\end{align*}
Substituting this bound back yields
\[
\frac{1}{n+1}\left|\sum_{k=0}^{n} b_k\right| > \frac{n+2}{2} - \frac{n+2}{16} = \frac{7}{16}(n+2).
\]
Taking $n \to \infty$ shows that $b \notin \sigma_\infty$, contradicting the assumption that $b \in \sigma_\infty$. Therefore, $\mathbb{B}(y, \frac{1}{8}) \cap \mathscr{R}(R - \lambda I) = \emptyset$, which implies $y \notin \overline{\mathscr{R}(R - \lambda I)}$ and proves that $e^{i\theta} \in \sigma_{r}(R, \sigma_{\infty})$ for all $\theta \in (0, 2\pi)$.

\textbf{Case 3: $\lambda = 1$.} \\
Let $y = (1000, 1000, 1000, \dots) \in \sigma_\infty$. Suppose for contradiction that $y \in \overline{\mathscr{R}(R - I)}$. Then for $\epsilon = 1$, there exists $x \in \sigma_{\infty}$ such that $\|(R - I)x - y\|_{\sigma_\infty} < 1$. By definition of the norm on $\sigma_\infty$,
\[
\frac{1}{k+1}\left|\sum_{n=0}^{k}(x_{n-1} - x_n - 1000)\right| < 1 \implies \left|\frac{x_k}{k+1} + 1000\right| < 1, \qquad \text{for all } k \in \mathbb{N}_0,
\]
where $x_{-1} = 0$. Taking the real part $\operatorname{Re}(x_k)$ yields
\[
\left|\frac{\operatorname{Re}(x_k)}{k+1} + 1000\right| < 1 \implies -1001(k+1) < \operatorname{Re}(x_k) < -999(k+1), \qquad k \in \mathbb{N}_0.
\]
Summing over $k = 0, 1, \dots, n$ gives
\[
\sum_{k=0}^{n}\operatorname{Re}(x_k) < \sum_{k=0}^{n} -999(k+1) = -999\frac{(n+1)(n+2)}{2}.
\]
Dividing by $n+1$ leads to
\[
\frac{1}{n+1}\left|\sum_{k=0}^{n}\operatorname{Re}(x_k)\right| > \frac{999(n+2)}{2} \xrightarrow{n \to \infty} \infty,
\]
which implies $x \notin \sigma_\infty$, a contradiction. Thus $y \notin \overline{\mathscr{R}(R - I)}$, proving $1 \in \sigma_r(R, \sigma_\infty)$.

Combining Cases 1–3 yields $\overline{\mathbb{D}} \subseteq \sigma_{r}(R, \sigma_{\infty}) \subseteq \sigma(R, \sigma_{\infty}) = \overline{\mathbb{D}}$, establishing $\sigma_{r}(R, \sigma_{\infty}) = \overline{\mathbb{D}}$.
\end{proof}

\begin{corollary}\label{cor:R_fine_spectrum}
The fine spectral decomposition of the unilateral shift operator $R$ on $\sigma_\infty$ consists of:
\[
\sigma_p(R, \sigma_\infty) = \emptyset, \quad \sigma_c(R, \sigma_\infty) = \emptyset, \quad \text{and} \quad \sigma_r(R, \sigma_\infty) = \sigma(R, \sigma_\infty)= \overline{\mathbb{D}}.
\]
\end{corollary}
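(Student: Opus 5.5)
The corollary follows by assembling results already proved in this section, so the plan is bookkeeping rather than new estimates. The three fine spectral parts are by definition mutually disjoint subsets of $\sigma(R,\sigma_\infty)$ whose union is the whole spectrum. I will therefore pin down each piece and then use disjointness to finish.

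\textbf{Point spectrum.} I will record that $R-\lambda I$ is injective for every $\lambda\in\mathbb{C}$. If $(R-\lambda I)x=0$, then $-\lambda x_0=0$ and $x_{n-1}=\lambda x_n$ for $n\ge 1$.
\begin{itemize}
\item For $\lambda=0$, this says $Rx=0$, which forces $x=0$ directly.
\item For $\lambda\neq 0$, we get $x_0=0$, and induction through the recurrence gives $x_n=0$ for all $n$.
\end{itemize}
Hence $\mathscr{N}(R-\lambda I)=\{0\}$ and $\sigma_p(R,\sigma_\infty)=\emptyset$.

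\textbf{Residual spectrum and spectrum.} Theorem~\ref{R_residual} gives $\sigma_r(R,\sigma_\infty)=\overline{\mathbb{D}}$. The preceding proposition gives $\sigma(R,\sigma_\infty)=\overline{\mathbb{D}}$. Together these yield $\sigma_r=\sigma$.

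\textbf{Continuous spectrum.} Since $\sigma(R,\sigma_\infty)=\sigma_p\cup\sigma_c\cup\sigma_r$ is a disjoint union, we have
\[
\sigma_c(R,\sigma_\infty)\subseteq \sigma(R,\sigma_\infty)\setminus\sigma_r(R,\sigma_\infty)=\overline{\mathbb{D}}\setminus\overline{\mathbb{D}}=\emptyset .
\]

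There is no genuine obstacle here; all the analytic work sits in Theorem~\ref{R_residual}. The only point needing care is that the three classes are mutually exclusive, which holds because membership in $\sigma_c$ requires a dense range while membership in $\sigma_r$ forbids it.
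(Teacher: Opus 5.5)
Your proposal is correct and follows the paper's own argument. Injectivity of $R-\lambda I$ gives $\sigma_p(R,\sigma_\infty)=\emptyset$; the paper states this just before Theorem~\ref{R_residual}, and your explicit recurrence check supplies the omitted detail. Theorem~\ref{R_residual} together with the spectrum computation then gives $\sigma_r=\sigma=\overline{\mathbb{D}}$, and disjointness of the fine spectral parts forces $\sigma_c(R,\sigma_\infty)=\emptyset$.
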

\begin{proof}
The proof directly follows from fact that $\sigma_p(R, \sigma_\infty)$, $\sigma_c(R, \sigma_\infty)$, and $\sigma_r(R, \sigma_\infty)$ forms a disjoint partition of $\sigma(R, \sigma_\infty)$ and Theorem \ref{R_residual}.
\end{proof}

\section{Fine Spectrum of Bounded Convolution Operators on \texorpdfstring{$\sigma_\infty$}{sigma infinity}}\label{sec4}

Having established the fine spectrum of the unilateral shift operator $R$ on $\sigma_{\infty}$, we now extend our analysis to general discrete convolution operators $T_a$. To study the spectral properties of $T_a$, we must first identify the class of complex sequences $a = (a_n)_{n=0}^\infty$ for which $T_a$ defines a bounded linear operator on $\sigma_\infty$. 

Recall that for a given sequence space $X$, its multiplier space $\mathscr{M}(X)$ is defined as
\[
\mathscr{M}(X) := \Bigl\{a \in \mathbb{C}^{\mathbb{N}_{0}} : a*x \in X \text{ for all } x \in X \Bigr\}.
\]
Since $\sigma_{\infty}$ is a Banach space, an application of the Closed Graph Theorem confirms that 
\[T_{a} \in \mathscr{B}(\sigma_{\infty}) \mbox{ if and only if } a\in \mathscr{M}(\sigma_{\infty}).\]
The characterization of $\mathscr{M}(\sigma_{\infty})$ and the determination of the exact operator norm of $T_a$ were established in \cite{saha2026convolution}, as summarized in the following lemmas.

\begin{lemma} \cite[Corollary 5.9]{saha2026convolution}
The multiplier space of $\sigma_{\infty}$ is $\ell^1$, that is, $\mathscr{M}(\sigma_{\infty}) = \ell^1$. Consequently, $T_a \in \mathscr{B}(\sigma_\infty)$ if and only if $a \in \ell^1$.
\end{lemma}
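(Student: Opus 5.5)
We have to show two inclusions: $\ell^1 \subseteq \mathscr{M}(\sigma_\infty)$, and every multiplier of $\sigma_\infty$ is absolutely summable. The last sentence of the lemma then follows from the Closed Graph Theorem remark made just before the statement. The natural tool for the first inclusion is the shift operator $R$ of Section~\ref{sec3}.

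\textbf{Sufficiency.} Let $a\in\ell^1$. Since $\|R\|_{\mathscr{B}(\sigma_\infty)}=1$, we have $\|R^k\|\le 1$ for every $k$. Hence the series $\sum_{k\ge0} a_k R^k$ converges absolutely in $\mathscr{B}(\sigma_\infty)$ to an operator $A$ with $\|A\|\le\|a\|_1$. To identify $A$ with $T_a$, fix $x\in\sigma_\infty$ and look at coordinates. Because $\sigma_\infty$ is a $BK$-space (Proposition~\ref{prop:sigma_properties}(ii)), norm convergence gives coordinatewise convergence. We have $(R^kx)_n=x_{n-k}$ for $k\le n$ and $0$ otherwise, so
\[
(Ax)_n=\sum_{k=0}^n a_k x_{n-k}=(a*x)_n .
\]
Thus $a*x=Ax\in\sigma_\infty$, which shows $\ell^1\subseteq\mathscr{M}(\sigma_\infty)$ and $\|T_a\|\le\|a\|_1$.

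\textbf{Necessity.} Let $a\in\mathscr{M}(\sigma_\infty)$. By the Closed Graph Theorem, $T_a$ is bounded; the graph is closed because $\sigma_\infty$ is a $BK$-space and each coordinate of $a*x$ is a finite combination of coordinates of $x$. The task is to test $T_a$ on suitable unimodular-type sequences and deduce $\sum|a_k|\le C\|T_a\|$.

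\textbf{Test sequences.} Fix $N$ and unimodular numbers $\epsilon_0,\dots,\epsilon_N$ with $\epsilon_k a_k=|a_k|$. The first candidate is $x=e_j$, which gives $\|e_j\|_{\sigma_\infty}=1/(j+1)$; this is too weak on its own. Instead I take sequences whose partial sums are large at a single index, for instance
\[
x=(j+1)(e_j-e_{j+1}).
\]
Its partial sums are $S_n(x)=j+1$ at $n=j$ and $0$ otherwise, so $\|x\|_{\sigma_\infty}=1$. The convolution $a*x$ has partial sums $S_n(a*x)=(j+1)a_{n-j}$ for $n\ge j$. Evaluating at $n=j+m$ gives
\[
\frac{(j+1)|a_m|}{j+m+1}\le\|T_a\| ,
\]
and letting $j\to\infty$ yields $|a_m|\le\|T_a\|$. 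This gives only $a\in\ell^\infty$.

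\textbf{Main obstacle: upgrading to $\ell^1$.} To reach $\ell^1$, I superpose such blocks with phases $\epsilon_k$, placed at widely spaced positions $j$ chosen so that their contributions to a single partial sum align. Concretely, I would take $x$ whose partial-sum profile is $S_n(x)=(n+1)\bar\epsilon_{M-n}$ on a long window ending at $M$, and then read off $S_M(a*x)=\sum_k a_kS_{M-k}(x)$. This quantity should be about $(M+1)\sum_{k\le N}|a_k|$ minus an error of size $N\sup|a|$, while $\|x\|_{\sigma_\infty}\lesssim1$. Dividing by $M+1$ and letting $M\to\infty$ gives $\sum_{k\le N}|a_k|\le\|T_a\|$. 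Controlling the norm of $x$ outside the window, where the profile must be switched off without creating large partial sums, is the delicate step. If this estimate fails, I would instead cite the weighted-space result \cite[Corollary 5.9]{saha2026convolution}, as the paper does.
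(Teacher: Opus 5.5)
Your argument is correct, and it takes a different route from the paper. The paper gives no proof of this lemma; it only cites \cite[Corollary 5.9]{saha2026convolution} for the weighted space with $d_n=n+1$. Your argument is self-contained. The sufficiency half builds the norm-convergent series $\sum_k a_kR^k$ from $\|R\|_{\mathscr{B}(\sigma_\infty)}=1$ and identifies it with $a*x$ coordinatewise via the $BK$ property. This uses only facts already proved in Sections~\ref{sec2} and~\ref{sec3}, and it gives $\|T_a\|\le\|a\|_{\ell^1}$. The necessity half combines the Closed Graph Theorem with test vectors chosen through their partial sums. Once finished as below, it gives $\|a\|_{\ell^1}\le\|T_a\|$. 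So you also obtain the norm identity $\|T_a\|_{\mathscr{B}(\sigma_\infty)}=\|a\|_{\ell^1}$ of the next lemma, which the paper likewise only cites. The citation, in turn, covers general weights, which your argument does not attempt.

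The step you call delicate is in fact immediate, and you already used the relevant device in your block $(j+1)(e_j-e_{j+1})$. The norm $\|x\|_{\sigma_\infty}$ depends only on the partial sums, and every sequence $s$ is the partial-sum sequence of $x_n=s_n-s_{n-1}$ (with $s_{-1}=0$). So for $M\ge N$ prescribe $S_n(x)=(n+1)\epsilon_{M-n}$ for $M-N\le n\le M$ and $S_n(x)=0$ otherwise. Switching the profile off costs nothing, and $\|x\|_{\sigma_\infty}=1$ exactly. Mind the phase: with $\epsilon_ka_k=|a_k|$, the profile must carry $\epsilon_{M-n}$, not $\bar\epsilon_{M-n}$. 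The identity $S_n(a*x)=\sum_{k=0}^n a_kS_{n-k}(x)$ then gives
\[
\frac{1}{M+1}\,S_M(a*x)=\sum_{k=0}^{N}\Bigl(1-\frac{k}{M+1}\Bigr)|a_k|\le\|a*x\|_{\sigma_\infty}\le\|T_a\|.
\]
Letting $M\to\infty$ and then $N\to\infty$ yields $\|a\|_{\ell^1}\le\|T_a\|$. The error term is $\frac{1}{M+1}\sum_{k\le N}k|a_k|$ rather than something of size $N\sup_k|a_k|$, but for fixed $N$ it vanishes either way. With this in place, the preliminary $\ell^\infty$ bound and the fallback citation are unnecessary.
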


\begin{lemma}
For every $a \in \ell^1$, the operator norm of $T_a$ is given by
\[
\|T_{a}\|_{\mathscr{B}(\sigma_{\infty})} = \|a\|_{\ell^1}.
\]
\end{lemma}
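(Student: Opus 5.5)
The statement has two halves: the upper bound $\|T_a\|_{\mathscr{B}(\sigma_\infty)} \le \|a\|_{\ell^1}$ and a matching lower bound. I will treat them separately.

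\textbf{Upper bound.} The plan is to write $T_a$ as a norm-convergent series of powers of the shift. Since $R^k$ acts as $k$-fold right shift, for $x\in\sigma_\infty$ we have
\[
(a*x)_n=\sum_{k=0}^n a_k x_{n-k}=\Bigl(\sum_{k=0}^\infty a_k R^k x\Bigr)_n .
\]
The $n$-th coordinate only involves finitely many terms, so this identity holds coordinatewise. By the earlier proposition, $\|R\|_{\mathscr{B}(\sigma_\infty)}=1$, hence $\|R^k\|\le 1$. Consequently $\sum_k a_kR^k$ converges absolutely in $\mathscr{B}(\sigma_\infty)$, with norm at most $\sum_k|a_k|=\|a\|_{\ell^1}$.

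Since $\sigma_\infty$ is a $BK$-space (Proposition~\ref{prop:sigma_properties}(ii)), norm convergence implies coordinatewise convergence. Therefore the norm limit coincides with $T_a x$, and $\|T_a x\|_{\sigma_\infty}\le \|a\|_{\ell^1}\|x\|_{\sigma_\infty}$.

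\textbf{Lower bound.} The main step is to produce unit vectors $x$ with $\|a*x\|_{\sigma_\infty}$ close to $\|a\|_{\ell^1}$. Constant sequences only give $|\sum a_k|$, because phases cancel. So I would instead use oscillating test sequences whose partial sums are $(n+1)\mu^n$ with $|\mu|=1$, as in Case 2 of Theorem~\ref{R_residual}. Concretely, fix $N$ and $\varepsilon>0$. The idea is to choose a sequence $x$ whose partial sums grow like $(n+1)$ times a phase pattern that aligns with $\overline{\operatorname{sgn} a_k}$ across the blocks $k=0,\dots,N$.

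The convenient reformulation is through partial sums. Write $S_n(a*x)=\sum_k a_k S_{n-k}(x)$, which holds since summation commutes with convolution. Then I need $S_m(x)/(m+1)$ to be close to $c_m$, where $|c_m|\le 1$ and $c_{n-k}\approx \overline{\operatorname{sgn}a_k}$ for $k\le N$ at one large index $n$. For example, set $S_m(x)=(m+1)c_m$ with $c_m$ specified near $m=n-k$ and arbitrary elsewhere. Any bounded sequence $(c_m)$ is admissible, because $x$ is then recovered as $x_m=S_m(x)-S_{m-1}(x)$ and automatically has $\|x\|_{\sigma_\infty}\le 1$.

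This freedom is exactly the non-solid, weighted-$\ell^\infty$ structure: the map $x\mapsto (S_m(x)/(m+1))_m$ is an isometry of $\sigma_\infty$ onto $\ell^\infty$.

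With this choice,
\[
\frac{|S_n(a*x)|}{n+1}\ \ge\ \sum_{k\le N}\frac{n+1-k}{n+1}|a_k|-\sum_{k>N}|a_k| .
\]
Letting $n\to\infty$, then $N\to\infty$, gives $\|T_a\|\ge\|a\|_{\ell^1}$.

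\textbf{Main obstacle.} The delicate point is keeping the factor $(n+1-k)/(n+1)$ close to $1$ while keeping the tail $\sum_{k>N}$ small. I would handle this by choosing $n\gg N$, so the factor tends to $1$ uniformly in $k\le N$. The tail is then controlled by the bound $|S_{n-k}(x)|\le n-k+1\le n+1$.
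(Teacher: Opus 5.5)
Your proof is correct, but it takes a different route from the paper, which does not prove this lemma internally at all: it just invokes \cite[Theorem 5.7]{saha2026convolution} with weight $d_n=n+1$.

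Your argument is self-contained and uses only facts established earlier in the paper, namely $\|R\|=1$ and the $BK$ property. So there is no circularity with the later use of this lemma in Proposition~\ref{prop:spectral-inclusion}. Your upper bound rests on the same shift-series representation $T_a=\sum_k a_kR^k$ that the paper derives afterwards in Lemma~\ref{TaPWS}.

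The key idea of your lower bound is stronger than you use it for. The map $J\colon x\mapsto \bigl(S_m(x)/(m+1)\bigr)_m$ is an isometry of $\sigma_\infty$ onto $\ell^\infty$. Combined with $S_n(a*x)=\sum_{k=0}^n a_kS_{n-k}(x)$, this gives both bounds at once. The conjugate $JT_aJ^{-1}$ is the lower triangular matrix on $\ell^\infty$ with entries $a_{n-m}\frac{m+1}{n+1}$. Its norm is therefore the supremum of its absolute row sums, $\sup_n\sum_{k=0}^n\bigl(1-\frac{k}{n+1}\bigr)|a_k|$. These row sums are nondecreasing in $n$ and converge to $\|a\|_{\ell^1}$, so the supremum equals $\|a\|_{\ell^1}$.

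The same observation streamlines your lower bound. If you choose $c_m=\overline{\operatorname{sgn}\,a_{n-m}}$ for all $m\le n$, rather than only for $n-N\le m\le n$, the tail term disappears and the double limit in $n$ and $N$ becomes unnecessary.

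What the citation buys is generality over weight sequences $d_n$. What your route buys is a transparent, elementary explanation of why the operator norm on $\sigma_\infty$ is exactly the $\ell^1$ norm.
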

\begin{proof}
    The result follows directly from \cite[Theorem 5.7]{saha2026convolution} taking the weight sequence $d_n=n+1$.
\end{proof}

In this section, we analyze the spectral properties and fine spectrum classification of the convolution operator $T_a$ on $\sigma_\infty$, beginning with an algebraic inclusion relation for the spectrum. Note that if $a = (a_0, 0, 0, \ldots)$, then $T_a = a_0 I$ is a scalar operator with $\sigma(T_a) = \sigma_p(T_a) = \{a_0\}$ and $\sigma_c(T_a) = \sigma_r(T_a) = \emptyset$. To exclude this case, we assume throughout the remainder of this section that $a = (a_0, a_1, \ldots) \in \ell^1$ with $a_n \neq 0$ for at least one $n \ge 1$, ensuring $T_a$ is a well-defined, non-scalar bounded linear operator on $\sigma_\infty$.


Let $\mathfrak{B} = \mathscr{B}(\sigma_{\infty})$ denote the unital Banach algebra of bounded linear operators on $\sigma_{\infty}$ endowed with the operator norm $\|\cdot\|_{\mathfrak{B}}$, where $I \in \mathfrak{B}$ is the identity operator. On $\mathfrak{B}$, define the algebra of convolution operators
$\mathscr{M}_{\mathrm{op}} := \{T_{a} : a \in \ell^1\} \subseteq \mathfrak{B}.$ Finally, let $\mathfrak{W}$ denote the analytic Wiener algebra consisting of continuous functions $\Phi_a : \overline{\mathbb{D}} \to \mathbb{C}$ of the form
\[
\Phi_a(z) = \sum_{n=0}^{\infty} a_n z^n, \ a \in \ell^1,
\]
equipped with pointwise operations, unit element $\mathbf{1}_{\overline{\mathbb{D}}}$, and norm $\|\Phi_a\|_{\mathfrak{W}} = \|a\|_{\ell^1}$.

We are now ready to investigate the spectrum of $T_{a}$. The following inclusion relation provides a partial characterization of $\sigma(T_{a})$ using an algebraic method.
\begin{proposition}\label{prop:spectral-inclusion}
For any $a \in \ell^1$, the spectrum of $T_a \in \mathscr{B}(\sigma_{\infty})$ satisfies
\[
\sigma(T_a, \sigma_{\infty}) \subseteq \Phi_a(\overline{\mathbb{D}}).
\]
\end{proposition}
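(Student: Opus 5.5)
The plan is a Banach-algebra argument built on the algebra isomorphism between $\mathscr{M}_{\mathrm{op}}$ and the Wiener algebra $\mathfrak{W}$. First, the map
\[
\Psi:\mathfrak{W}\to\mathscr{M}_{\mathrm{op}},\qquad \Psi(\Phi_a)=T_a,
\]
is linear and sends $\mathbf{1}_{\overline{\mathbb{D}}}$ to $I$. It is multiplicative because $T_a T_b x = a*(b*x) = (a*b)*x = T_{a*b}x$, and $\Phi_a\Phi_b=\Phi_{a*b}$ by the Cauchy product of absolutely convergent series. By the operator-norm lemma above, $\|T_a\|_{\mathfrak{B}}=\|a\|_{\ell^1}=\|\Phi_a\|_{\mathfrak{W}}$, so $\Psi$ is an isometric unital algebra isomorphism onto $\mathscr{M}_{\mathrm{op}}$. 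In particular $\mathscr{M}_{\mathrm{op}}$ is a closed, commutative, unital subalgebra of $\mathfrak{B}$, since $\ell^1$ is complete.

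Now fix $\lambda\notin\Phi_a(\overline{\mathbb{D}})$. Then $\Phi_a-\lambda=\Phi_{a-\lambda e_0}$ is an element of $\mathfrak{W}$ with no zeros on $\overline{\mathbb{D}}$. The key step is to show it is invertible in $\mathfrak{W}$, i.e.\ that $1/(\Phi_a-\lambda)=\Phi_b$ for some $b\in\ell^1$. This is the analytic (one-sided) version of Wiener's $1/f$ theorem, which I would prove with Gelfand theory. The maximal ideal space of the disc algebra $\ell^1(\mathbb{N}_0)$ under convolution is $\overline{\mathbb{D}}$: a character $\chi$ is determined by $z=\chi(e_1)$, and $|z|\le\|e_1\|=1$, so $\chi(a)=\sum a_n z^n=\Phi_a(z)$. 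Conversely, every $z\in\overline{\mathbb{D}}$ gives a character. Hence an element is invertible exactly when its Gelfand transform $\Phi_a$ never vanishes on $\overline{\mathbb{D}}$. This identification of the characters, and its citation as the standard result, is the only nontrivial input, and I expect it to be the main step to justify carefully.

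With $b$ in hand, $(a-\lambda e_0)*b=e_0$. Applying $\Psi$ gives
\[
(T_a-\lambda I)\,T_b=T_b\,(T_a-\lambda I)=T_{e_0}=I .
\]
Since $T_b\in\mathscr{B}(\sigma_\infty)$ (as $b\in\ell^1$), $T_a-\lambda I$ is bijective with bounded inverse, so $\lambda\in\rho(T_a,\sigma_\infty)$. Taking complements yields $\sigma(T_a,\sigma_\infty)\subseteq\Phi_a(\overline{\mathbb{D}})$. Nothing specific to the non-separability of $\sigma_\infty$ is used: the argument needs only that $T_c$ is bounded for every $c\in\ell^1$, which is given by the multiplier lemma.
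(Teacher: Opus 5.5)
Your proof is correct and is essentially the paper's argument: the paper also passes to $(\ell^1,*)\cong\mathfrak{W}$, identifies the characters with point evaluations on $\overline{\mathbb{D}}$, and then uses the fact that invertibility in the subalgebra $\mathscr{M}_{\mathrm{op}}$ implies invertibility in $\mathscr{B}(\sigma_\infty)$. You simply make that last spectral-inclusion step explicit by exhibiting the inverse $T_b$, and you rightly note that only the unital homomorphism $c\mapsto T_c$ is needed, not the isometry. One terminological slip: $\ell^1(\mathbb{N}_0)$ under convolution is the analytic Wiener algebra $\mathfrak{W}$, not the disc algebra, which carries the sup norm and is strictly larger; this does not affect your argument, since your character computation is carried out in $\ell^1$.
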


\begin{proof}
From the multiplier algebra $\mathscr{M}(\sigma_{\infty})= \ell^1$ and the commutativity relation $a*b=b*a$, the Banach algebra $\mathscr{M}_{op}$ is commutative and closed sub-algebra of $\mathfrak{B}$ containing $I$. Also, let $\mathfrak{A}:=\big(\ell^{1},*\big)$ be the Banach algebra, where the multiplication $*$ is convolution, and $e_0 = (\delta_{n0})_{n=0}^\infty$ is the unit element of $\ell^1$. By the operator norm identity $\|T_a\|_{\mathfrak{B}} = \|a\|_{\ell^1}$ the mapping
\[
\psi : \mathfrak{A} \longrightarrow \mathscr{M}_{\mathrm{op}}, \qquad \psi(a) = T_a,
\]
is an isometric algebra isomorphism satisfying $\psi(e_0) = I$, Again, the mapping
\[
\mathcal{G} : \ell^1 \longrightarrow \mathfrak{W}, \qquad \mathcal{G}(a) = \Phi_a,
\]
defines an isometric algebra isomorphism mapping $e_0$ to $\mathbf{1}_{\overline{\mathbb{D}}}$. Combining these maps yields the isometric algebra isomorphism $\theta := \mathcal{G} \circ \psi^{-1} : \mathscr{M}_{\mathrm{op}} \longrightarrow \mathfrak{W}$, as illustrated in Figure~\ref{fig:triple-isomorphism}.

\begin{figure}[htbp]
\centering
\begin{tikzcd}[column sep=1.5cm, row sep=0.8cm]
& \mathfrak{A} \arrow[dl, "\psi"', "\cong"] \arrow[dr, "\mathcal{G}", "\cong"'] & \\
\mathscr{M}_{\mathrm{op}} \arrow[rr, "\theta", "\cong"' ] & & \mathfrak{W}
\end{tikzcd}
\caption{Commutative diagram of isometric Banach algebra isomorphisms relating $\mathscr{M}_{\mathrm{op}}$, $(\ell^1, *)$, and $\mathfrak{W}$.}
\label{fig:triple-isomorphism}
\end{figure}

Consequently, all three Banach algebras $\mathscr{M}_{op}$, $\mathfrak{A}$, and $\mathfrak{W}$ are isometrically isomorphic. By Gelfand theory for commutative Banach algebras, the spectrum of an element in $\mathfrak{W}$ equals its range over the character space $\Delta(\mathfrak{W})$. Identifying $\Delta(\mathfrak{W})$ with $\overline{\mathbb{D}}$ via point evaluations $\chi_z(\Phi_a) = \Phi_a(z)$ for $z \in \overline{\mathbb{D}}$, we obtain
\[
\sigma_{\mathfrak{W}}(\Phi_a) = \{\chi_z(\Phi_a) : z \in \overline{\mathbb{D}}\} = \Phi_a(\overline{\mathbb{D}}).
\]
Invariance of the spectrum under unital algebra isomorphisms implies
\[
\sigma_{\mathscr{M}_{\mathrm{op}}}(T_a) = \sigma_{\ell^1}(a) = \sigma_{\mathfrak{W}}(\Phi_a) = \Phi_a(\overline{\mathbb{D}}).
\]
Finally, since $\mathscr{M}_{\mathrm{op}}$ is a closed, unital subalgebra of $\mathfrak{B} = \mathscr{B}(\sigma_\infty)$, standard spectral inclusion for Banach subalgebras yields
\[
\sigma_{\mathfrak{B}}(T_a) \subseteq \sigma_{\mathscr{M}_{\mathrm{op}}}(T_a) = \Phi_a(\overline{\mathbb{D}}),
\]
completing the proof.
\end{proof}

To this end, we establish a fundamental structural representation of the convolution operator $T_a$ as an power series of the right shift operator. This representation serves as the main tool for deriving the spectral properties of $T_a$.

	\begin{lemma}\label{TaPWS}
		Let $a\in \ell^{1}$ and $\Phi_{a}(z)$ denotes the power series with $a$ as sequence of coefficients. Then the convolution operator $T_a \in \mathscr{B}(\sigma_\infty)$ admits the operator-valued power series representation
\[
T_a = \Phi_a(R) = \sum_{n=0}^{\infty} a_n R^n,
\]
where the series converges absolutely to $T_a$ in the operator norm topology of $\mathscr{B}(\sigma_{\infty})$.
	\end{lemma}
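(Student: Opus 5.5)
We will show that the series converges absolutely in operator norm and then identify its sum with $T_a$ by comparing coordinates.

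\textbf{Absolute convergence.} Since $\|R\|_{\mathscr{B}(\sigma_\infty)}=1$, submultiplicativity of the operator norm gives $\|R^n\|_{\mathscr{B}(\sigma_\infty)}\le 1$ for every $n\in\mathbb{N}_0$. Hence
\[
\sum_{n=0}^\infty \|a_nR^n\|_{\mathscr{B}(\sigma_\infty)}\le \sum_{n=0}^\infty|a_n|=\|a\|_{\ell^1}<\infty .
\]
Because $\mathscr{B}(\sigma_\infty)$ is a Banach space, the partial sums $P_N:=\sum_{n=0}^N a_nR^n$ converge in operator norm to some $S\in\mathscr{B}(\sigma_\infty)$. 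In fact each $P_N$ is itself a convolution operator: $R^n x=e_n*x$, so $P_N=T_{a^{[N]}}$, where $a^{[N]}$ is the $N$-th section of $a$.

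\textbf{Identification of the limit.} Fix $x\in\sigma_\infty$ and $m\in\mathbb{N}_0$. Since $(R^nx)_m=x_{m-n}$ for $n\le m$ and $(R^nx)_m=0$ for $n>m$, we have, for every $N\ge m$,
\[
(P_Nx)_m=\sum_{n=0}^{m}a_nx_{m-n}=(a*x)_m=(T_ax)_m .
\]
By Proposition~\ref{prop:sigma_properties}(ii), $\sigma_\infty$ is a $BK$-space; concretely, Lemma~\ref{MxSI} gives $|p_m(z)|\le(2m+1)\|z\|_{\sigma_\infty}$. Therefore operator-norm convergence $P_N\to S$ implies $(P_Nx)_m\to(Sx)_m$. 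The sequence $(P_Nx)_m$ is eventually constant and equal to $(T_ax)_m$, so $(Sx)_m=(T_ax)_m$ for all $m$. Hence $Sx=T_ax$ for every $x$, which gives $T_a=S=\sum_{n=0}^\infty a_nR^n=\Phi_a(R)$.

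\textbf{Main point.} The only potential obstacle is justifying that a norm limit in $\sigma_\infty$ is determined by its coordinates. The $BK$ property already stated handles this, so no separability or duality argument is needed. Alternatively, one can invoke the norm identity $\|T_a-T_{a^{[N]}}\|=\|a-a^{[N]}\|_{\ell^1}\to0$, which gives the convergence to $T_a$ directly.
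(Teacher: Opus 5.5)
Your proof is correct, but it takes a different route from the paper's.

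\textbf{The paper's route.} The paper does not use completeness of $\mathscr{B}(\sigma_\infty)$ or the $BK$ property. It writes out the tail $[T_a-\Phi_a^n(R)]x$ explicitly and notes that its partial sums are $a_{n+1}S_{k-1}(x)+a_{n+2}S_{k-2}(x)+\dots+a_{n+k}S_0(x)$. It then bounds each $|S_j(x)|$ by $(j+1)\|x\|_{\sigma_\infty}$. This gives directly
\[
\|T_a-\Phi_a^n(R)\|_{\mathscr{B}(\sigma_\infty)}\le\sum_{i>n}|a_i|,
\]
which amounts to re-deriving by hand the bound $\|T_b\|\le\|b\|_{\ell^1}$ for the tail $b=a-a^{[n]}$.

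\textbf{Your route.} You obtain existence of the norm limit softly, from $\|R\|\le 1$ and completeness. You then identify the limit as $T_a$ through continuity of the coordinate functionals (Lemma~\ref{MxSI}), using that each coordinate of $P_Nx$ is eventually constant.

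\textbf{What each buys.} Your version has two advantages.
\begin{enumerate}
\item It explicitly proves the \emph{absolute} convergence claimed in the statement. The paper only shows convergence of the partial sums, leaving absoluteness implicit in $\|R\|=1$.
\item It carries over verbatim to any $BK$-space on which $R$ is a contraction (or merely power bounded). In the contraction case it gives the same tail estimate $\|T_a-P_N\|\le\sum_{n>N}|a_n|$.
\end{enumerate}
The paper's computation, in turn, works purely at the level of the $\sigma_\infty$-norm and needs no coordinate-identification step.

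Your closing alternative, via $T_a-T_{a^{[N]}}=T_{a-a^{[N]}}$ and the quoted norm identity $\|T_b\|=\|b\|_{\ell^1}$, is also valid. It is essentially the paper's argument with the hand computation replaced by the cited result.
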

	\begin{proof}
		Let $x \in \sigma_{\infty}$. Then, for any $k\in \mathbb{N}$ and $h=0,1,\dots,k-1$, we have
        \[
        \frac{1}{n+k}\Big|\sum_{i=0}^{k-h-1}x_{i}\Big| = \frac{k-h}{n+k}\frac{1}{k-h}\Big|\sum_{i=0}^{k-h-1}x_{i}\Big| \le \frac{k-h}{n+k} \|x\|_{\sigma_{\infty}} \le \|x\|_{\sigma_{\infty}},
        \]
        where $n\in \mathbb{N}_{0}$. Now, if $\Phi_{a}^{n}(z)= \sum_{k=0}^{n}a_{k}z^k$, then 
		\begin{align*}
			&\Big\| \big[T_{a} - \Phi_{a}^{n}(R)\big]\big(x\big) \Big\|_{\sigma_{\infty}} \\
			&= \Big\| \big(0, 0, \cdots , 0, \underbrace{a_{n+1} x_{0}}_{(n+1) \text{-th term}}, \underbrace{a_{n+2} x_{0} + a_{n+1} x_{1}}_{(n+2)\text{-th term}}, \cdots ,\underbrace{ \sum_{i=0}^{k-1} a_{n+k-i} x_{i}}_{(n+k) \text{-th term}}, \cdots \big)\Big\|_{\sigma_{\infty}} \\
			&= \underset{k \geq 1}{\sup} \  \frac{1}{n+k}\Big|a_{n+1}x_{0} + a_{n+2}x_{0}+a_{n+1}x_{1}+ \cdots+ \sum_{i=0}^{k-1} a_{n+k-i} x_{i}\Big| \\
			&= \underset{k \geq 1}{\sup} \frac{1}{n+k} \Big| a_{n+1} \sum_{i=0}^{k-1}x_{i}  +  a_{n+2} \sum_{i=0}^{k-2}x_{i}  + \cdots+  a_{n+k}x_{0} \Big|\\
			& \leq \underset{k \geq 1}{\sup}\ \Biggl\{\big|a_{n+1}\big|\ \frac{1}{n+k}\Bigg|\sum_{i=0}^{k-1}x_{i}\Bigg| + \big|a_{n+2}\big|\ \frac{1}{n+k}\Bigg|\sum_{i=0}^{k-2}x_{i}\Bigg|  + \\
			&  \cdots + \big|a_{n+k} \big|\ \frac{1}{n+k} \big|x_{0}\big| \Biggr\} \\
			& \leq \underset{k \geq 1}{\sup} \sum_{i=1}^{k}\big|a_{n+i}\big|\  \big\|x\big\|_{\sigma_{\infty}}\\
			& = \sum_{i=n+1}^{\infty}\big|a_{i}\big|\  \big\|x\big\|_{\sigma_{\infty}}.
		\end{align*}
		Hence, 
        \[\Big\|T_{a} - \Phi_{a}^{n}(R) \Big\|_{\mathscr{B}(\sigma_{\infty})} \leq \sum_{i=n+1}^{\infty}\big|a_{i}\big| \to 0 \mbox{ as } n \to \infty.\]
		This proves $T_{a}= \Phi_{a}(R) = \sum_{n=0}^{\infty}a_{n}R^n$.
	\end{proof}

  In the study of the spectrum and fine spectrum of $T_{a}$, we first investigate the point spectrum of $T_{a}$.  
	\begin{theorem} \label{pts-Ta-SI}
		Let $a \in \ell^1$. Then the point spectrum of the convolution operator $T_a \in \mathscr{B}(\sigma_\infty)$ is empty, i.e.,
    \[
    \sigma_p(T_a, \sigma_\infty) = \emptyset.
    \]
	\end{theorem}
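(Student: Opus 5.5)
Fix $\lambda\in\mathbb{C}$ and suppose $x\in\sigma_\infty$ satisfies $(T_a-\lambda I)x=0$, i.e. $a*x=\lambda x$ coordinatewise. The plan is to show $x=0$ by a purely algebraic argument on the lower triangular Toeplitz system. Note that $T_a-\lambda I=T_{a-\lambda e_0}$, so it suffices to show that for any non-zero $b=a-\lambda e_0\in\ell^1$ the equation $b*x=0$ forces $x=0$. The sequence $b$ is non-zero for every $\lambda$, because the standing assumption guarantees $a_n\neq 0$ for some $n\ge1$, and that coordinate is unchanged by subtracting $\lambda e_0$.

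Let $m:=\min\{n: b_n\neq 0\}$, which exists since $b\neq 0$. The $n$-th coordinate of $b*x$ is $\sum_{k=0}^{n} b_{n-k}x_k=\sum_{k=0}^{n-m} b_{n-k}x_k$. Take $n=m$: this gives $b_m x_0=0$, so $x_0=0$. Now induct. If $x_0=\dots=x_{j-1}=0$, the coordinate $n=m+j$ reduces to $b_m x_j=0$, hence $x_j=0$. Therefore $x=0$ and $T_a-\lambda I$ is injective for every $\lambda$.

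Equivalently, one can argue with formal power series. The identity $b*x=0$ means $f_b(z)f_x(z)=0$ in $\mathbb{C}[[z]]$, which is an integral domain, so $f_b\neq0$ gives $f_x=0$. Lemma~\ref{TaPWS} is not really needed here, since the coordinate action of $T_a$ is exactly the convolution.

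There is essentially no obstacle. The only points to check are these:
\begin{itemize}
\item every term of the convolution is a finite sum, so there are no convergence issues for $x\in\sigma_\infty$ even though $x$ may grow;
\item the non-scalar assumption is what guarantees $b\neq0$ for all $\lambda$, including $\lambda=a_0$;
\item in the scalar case $a=(a_0,0,\dots)$ the argument correctly fails at $\lambda=a_0$, consistent with the remark preceding Proposition~\ref{prop:spectral-inclusion}.
\end{itemize}
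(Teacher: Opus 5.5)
Your proof is correct and is essentially the paper's argument: the paper also works directly with the triangular system $a*x=\lambda x$. It splits into the case $a_0\neq\lambda$, where $x=0$ follows by forward substitution (your $m=0$), and the case $a_0=\lambda$, where it takes the first nonzero index of $x$ and deduces $a_n=0$ for all $n\ge 1$, contradicting the standing non-scalar assumption. You swap the roles, taking the first nonzero index of $b=a-\lambda e_0$ to force $x=0$; this unifies the two cases and makes the dependence on the non-scalar assumption explicit, but it is the same no-zero-divisors argument.
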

	\begin{proof}
		Let $\lambda \in \mathbb{C}$ such that $\lambda \in \sigma_{p}(T_{a},\sigma_{\infty})$. Then $T_{a}(x)=\lambda x $, for some $x(\neq0)\in \sigma_{\infty}$. This gives the following system of equations:
		\begin{align*}
			a_{0}x_{0}&=\lambda x_{0}&\\
			a_{1}x_{0}+a_{0}x_{1}&=\lambda x_{1}&\\
			a_{2}x_{0}+a_{1}x_{1}+a_{0}x_{2}&=\lambda x_{2}&\\
			\hfil &\ \vdots \ &
		\end{align*}
		Consider the following cases.\\
		Case 1 $(a_{0}\neq \lambda)$: In this case $x_{n}=0, \forall n \in\mathbb{N}_{0}$. Therefore, $\lambda$ is not an eigenvalue of $T_{a}$.\\
		Case 2 $(a_{0}=\lambda)$:
		Let $k$ be the smallest index for which $x_{k}\neq 0$. Then we have
		\begin{align*}
			a_{0}x_{k}&= \lambda x_{k}\\
			a_{1}x_{k}+ a_{0}x_{k+1} &= \lambda x_{k+1}\\
			a_{2}x_{k}+ a_{1}x_{k+1}+ a_{0}x_{k+2} &= \lambda x_{k+2}\\
			& \ \ \vdots 
		\end{align*}
		This gives $a_{n} = 0$ for all $n \in \mathbb{N}$, which is not true. Therefore, $\sigma_{p}(T_{a}, \sigma_{\infty}) = \emptyset$.
	\end{proof}	

\begin{theorem} \label{RSTa}
	For any $a \in \ell^1$, the residual spectrum of the convolution operator $T_a \in \mathscr{B}(\sigma_\infty)$ is given by
\[
\sigma_r(T_a, \sigma_\infty) = \Phi_a(\overline{\mathbb{D}}).
\]
\end{theorem}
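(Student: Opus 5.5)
The plan is to exhibit, for each $\lambda\in\Phi_a(\overline{\mathbb{D}})$, a nonzero bounded linear functional on $\sigma_\infty$ that vanishes on $\mathscr{R}(T_a-\lambda I)$. Then $\overline{\mathscr{R}(T_a-\lambda I)}\neq\sigma_\infty$. By Theorem~\ref{pts-Ta-SI}, $T_a-\lambda I$ is injective for every $\lambda$, so this puts $\lambda$ in $\sigma_r(T_a,\sigma_\infty)$. Combined with $\sigma_r\subseteq\sigma\subseteq\Phi_a(\overline{\mathbb{D}})$ from Proposition~\ref{prop:spectral-inclusion}, this gives equality. So fix $\lambda=\Phi_a(z_0)$ with $z_0\in\overline{\mathbb{D}}$, and split into the cases $|z_0|<1$ and $|z_0|=1$, as in Theorem~\ref{R_residual}.

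\textbf{Interior case $|z_0|<1$.} Use $f_{z_0}(x)=\sum_{n\ge0}z_0^nx_n$, which is bounded on $\sigma_\infty$ by Lemma~\ref{MxSI}, exactly as in Case~1 of Theorem~\ref{R_residual}. Since $a\in\ell^1$ and $\sum_n|z_0|^n|x_n|<\infty$, the Cauchy product theorem applies, so
\[
f_{z_0}(T_ax)=\sum_n z_0^n (a*x)_n=\Phi_a(z_0)\,f_{z_0}(x).
\]
Hence $f_{z_0}\bigl((T_a-\lambda I)x\bigr)=0$, while $f_{z_0}(e)=(1-z_0)^{-1}\neq0$ for $e=(1,1,\dots)$.

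\textbf{Boundary case $|z_0|=1$.} Here $f_{z_0}$ is not available, so I would use a Banach limit $\mathrm{LIM}$ on $\ell^\infty$ and define
\[
\varphi_{z_0}(x)=\mathrm{LIM}_{n}\ \frac{1}{(n+1)^2}\sum_{m=0}^{n} z_0^m S_m(x).
\]
Because $|S_m(x)|\le (m+1)\|x\|_{\sigma_\infty}$, the inner quantity is bounded by $\tfrac{n+2}{2(n+1)}\|x\|_{\sigma_\infty}$, so $\varphi_{z_0}$ is a bounded linear functional. It is nonzero: the sequence $y$ with $S_m(y)=(m+1)\bar z_0^{\,m}$ (the same $y$ as in Case~2 of Theorem~\ref{R_residual}, and $y=e$ when $z_0=1$) gives $\varphi_{z_0}(y)=\tfrac12$.

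The key computation, and the main obstacle, is showing that $\varphi_{z_0}$ annihilates the range. Partial sums commute with convolution, i.e.\ $S_m(a*u)=\sum_{j\le m}a_jS_{m-j}(u)$. Using this and reindexing gives
\[
\sum_{m=0}^n z_0^m S_m\bigl(T_au-\Phi_a(z_0)u\bigr)
=-\sum_{j\le n}a_jz_0^j\!\!\sum_{m=n-j+1}^{n}\!\! z_0^mS_m(u)\;-\;\sum_{j>n}a_jz_0^j\sum_{m=0}^n z_0^mS_m(u),
\]
where the second term collects the tail $j>n$ of $\Phi_a(z_0)$. Its modulus is at most
\[
(n+1)\|u\|_{\sigma_\infty}\sum_{j}|a_j|\min(j,n+1).
\]
After dividing by $(n+1)^2$, what remains is $\|u\|_{\sigma_\infty}\sum_j|a_j|\min(j,n+1)/(n+1)$. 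This tends to $0$ by dominated convergence, since each term tends to $0$ and is dominated by $|a_j|$, with $a\in\ell^1$. So the sequence inside the Banach limit is null, and $\varphi_{z_0}$ vanishes on $\mathscr{R}(T_a-\lambda I)$.

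The care needed is in this telescoping and in the tail bookkeeping, but both are controlled by the weight $(n+1)^{-2}$, which is designed precisely so that the $O(n)$ boundary terms disappear while the growth $S_m(y)\sim m$ of the test vector survives. Since $\varphi_{z_0}$ is nonzero and vanishes on the closure of the range, every boundary value $\lambda=\Phi_a(z_0)$ lies in the residual spectrum, which completes the proof.
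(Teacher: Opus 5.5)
Your proof is correct, but it takes a different route from the paper. The paper never writes down a functional for $T_a-\Phi_a(\lambda)I$. For $\lambda\in\overline{\mathbb{D}}$ it takes the nonzero $f$ annihilating $\mathscr{R}(R-\lambda I)$, supplied by Theorem~\ref{R_residual} and Hahn--Banach, and notes that $f(R^kx)=\lambda^kf(x)$. It then uses the operator-norm expansion $T_a=\sum_k a_kR^k$ from Lemma~\ref{TaPWS} to conclude $f(T_ax)=\Phi_a(\lambda)f(x)$. You instead exhibit the annihilators explicitly. Inside the disk you use $f_{z_0}$, with the Cauchy product theorem doing the job of Lemma~\ref{TaPWS}. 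On the circle you use the Banach-limit functional $\varphi_{z_0}$, and you verify that it annihilates the range through $S_m(a*u)=\sum_{j\le m}a_jS_{m-j}(u)$ and a dominated-convergence tail estimate. The telescoping identity and the bound $(n+1)\|u\|_{\sigma_\infty}\sum_j|a_j|\min(j,n+1)$ both check out.

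The paper's route is modular: once the fine spectrum of $R$ is known, it transfers to every $\Phi_a(R)$ by the same eigenfunctional mechanism reused in Lemma~\ref{RSMT}. However, it rests on Lemma~\ref{TaPWS} and on the case-by-case ball arguments of Theorem~\ref{R_residual}, with $\lambda=1$ handled separately. Your argument is self-contained and treats all boundary points uniformly. Specialized to $a=e_1$ (so $T_a=R$), it even gives an alternative proof of Cases~2 and~3 of Theorem~\ref{R_residual}. The cost is more delicate tail bookkeeping, a non-constructive Banach limit, and reliance on the concrete estimate $|S_m(x)|\le (m+1)\|x\|_{\sigma_\infty}$ rather than on abstract structure. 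Both arguments depend equally on Theorem~\ref{pts-Ta-SI}, and hence on the standing assumption that $T_a$ is not a scalar operator.
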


\begin{proof}
Let $\lambda \in \overline{\mathbb{D}}$. By Theorem \ref{R_residual}, $\lambda \in \sigma_{r}(R, \sigma_{\infty})$, which implies that the range $\mathscr{R}(R - \lambda I)$ is not dense in $\sigma_{\infty}$. By the Hahn--Banach Theorem, there exists a non-zero continuous linear functional $f \in \sigma_{\infty}^{\prime}$ such that $f(y) = 0$ for all $y \in \overline{\mathscr{R}(R - \lambda I)}$. 

Consequently, $f((R - \lambda I)x) = 0$ for all $x \in \sigma_{\infty}$, which gives $f(Rx) = \lambda f(x)$. By simple induction on $n$, it follows that
\[
f(R^n x) = \lambda^n f(x) \quad \text{for all } n \in \mathbb{N}_0 \text{ and } x \in \sigma_{\infty}.
\]
Recall from Theorem \ref{TaPWS} that $T_a = \Phi_a(R) = \sum_{k=0}^{\infty} a_k R^k$, where the series converges in the operator norm topology of $\mathscr{B}(\sigma_{\infty})$. Using the continuity of $f$ it follows:
\begin{align*}
			f(T_{a}(x)) = f\big(\Phi_{a}(R)x\big)= f\Bigg(\sum_{k=0}^{\infty}a_{k}R^{k}x\Bigg) &= \lim_{n\to \infty} \sum_{k=0}^{n}a_{k}f(R^{k}x)\\
			& = \lim_{n\to \infty} \sum_{k=0}^{n} a_{k}\lambda^{k}f(x)\\
			& = f(x) \lim_{n\to \infty} \sum_{k=0}^{n} a_{k}\lambda^{k}\\
			& = f(x) \Phi_{a}(\lambda)\\
			& = f \Big(\Phi_{a}(\lambda) x\Big).
		\end{align*} 
This yields $f\left( \big( T_a - \Phi_a(\lambda) I \big) x \right) = 0$ for all $x \in \sigma_{\infty}$. Since $f \neq 0$, it follows that $\overline{\mathscr{R}(T_a - \Phi_a(\lambda) I)} \neq \sigma_{\infty}$, meaning that $T_a - \Phi_a(\lambda) I$ does not have a dense range. 

Moreover, Theorem \ref{pts-Ta-SI} guarantees that $\sigma_p(T_a, \sigma_{\infty}) = \emptyset$, so $T_a - \Phi_a(\lambda) I$ is injective for every $\lambda \in \overline{\mathbb{D}}$. Therefore, $\Phi_a(\lambda) \in \sigma_r(T_a, \sigma_{\infty})$, establishing the inclusion
\[
\Phi_a(\overline{\mathbb{D}}) \subseteq \sigma_r(T_a, \sigma_{\infty}).
\]
Combining this with the spectral containment $\sigma(T_a, \sigma_{\infty}) \subseteq \Phi_a(\overline{\mathbb{D}})$ in Proposition \ref{prop:spectral-inclusion}, we conclude that
\[
\sigma_r(T_a, \sigma_{\infty}) = \Phi_a(\overline{\mathbb{D}}).
\]
\end{proof}

By combining the algebraic containment bound from Proposition~\ref{prop:spectral-inclusion} with the residual spectrum in Theorem~\ref{RSTa}, we obtain the complete spectral characterization of $T_a$.

    \begin{corollary}\label{WSTa}
    For any $a \in \ell^1$, the spectrum of the convolution operator $T_a \in \mathscr{B}(\sigma_\infty)$ is given by
\[
\sigma(T_a, \sigma_\infty) = \Phi_a(\overline{\mathbb{D}}).
\]
    \end{corollary}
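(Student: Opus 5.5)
The plan is to sandwich $\sigma(T_a,\sigma_\infty)$ between two sets already identified in the paper. Both inclusions are in place, so no new analysis is needed.

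For the upper bound, I will invoke Proposition~\ref{prop:spectral-inclusion}. It gives $\sigma(T_a,\sigma_\infty)\subseteq\Phi_a(\overline{\mathbb{D}})$ by passing from the full operator algebra $\mathscr{B}(\sigma_\infty)$ to the closed commutative unital subalgebra $\mathscr{M}_{\mathrm{op}}\cong(\ell^1,*)\cong\mathfrak{W}$. In that subalgebra the spectrum of $T_a$ is the Gelfand range $\Phi_a(\overline{\mathbb{D}})$. Spectra can only shrink when one enlarges the ambient algebra, which yields the inclusion.

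For the lower bound, I will invoke Theorem~\ref{RSTa}, which shows $\Phi_a(\overline{\mathbb{D}})\subseteq\sigma_r(T_a,\sigma_\infty)$. Its argument runs as follows. Take $\lambda\in\overline{\mathbb{D}}$. The nonzero annihilating functional $f$ of $\overline{\mathscr{R}(R-\lambda I)}$ comes from Theorem~\ref{R_residual}. It satisfies $f\circ R^n=\lambda^n f$. By the norm-convergent expansion $T_a=\sum a_nR^n$ of Lemma~\ref{TaPWS}, this gives $f\circ(T_a-\Phi_a(\lambda)I)=0$. Since the fine spectrum is a disjoint partition of the spectrum, we have $\sigma_r(T_a,\sigma_\infty)\subseteq\sigma(T_a,\sigma_\infty)$. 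Together these give $\Phi_a(\overline{\mathbb{D}})\subseteq\sigma(T_a,\sigma_\infty)$.

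Combining the two inclusions gives equality. I would also record the immediate by-product that $\sigma_r(T_a,\sigma_\infty)=\sigma(T_a,\sigma_\infty)$, and hence, with Theorem~\ref{pts-Ta-SI}, that $\sigma_c(T_a,\sigma_\infty)=\emptyset$.

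No step is a genuine obstacle, since all the substantive work lives in the cited results. The one point needing care is the standing non-scalar assumption on $a$. For scalar $a$ the statement still holds trivially, because then $\Phi_a(\overline{\mathbb{D}})=\{a_0\}=\sigma(a_0I)$, but that case must be treated separately, since Theorem~\ref{pts-Ta-SI} and hence Theorem~\ref{RSTa} use $a_n\neq0$ for some $n\ge1$.
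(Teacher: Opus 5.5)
Your proposal is correct and follows the paper's proof: the paper also sandwiches $\Phi_a(\overline{\mathbb{D}})=\sigma_r(T_a,\sigma_\infty)\subseteq\sigma(T_a,\sigma_\infty)\subseteq\Phi_a(\overline{\mathbb{D}})$, using Theorem~\ref{RSTa} for the lower bound and Proposition~\ref{prop:spectral-inclusion} for the upper bound. Your separate handling of the scalar case is a harmless addition, though it is not needed: the annihilating functional already shows that $T_a-\Phi_a(\lambda)I$ is not surjective, and this does not use the injectivity supplied by Theorem~\ref{pts-Ta-SI}.
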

    \begin{proof}
        From Proposition \ref{prop:spectral-inclusion}, we have $\sigma(T_{a}, \sigma_{\infty})\subseteq \Phi_{a}(\overline{\mathbb{D}})$, while Theorem \ref{RSTa} gives, $\sigma_{r}(T_{a}, \sigma_{\infty})= \Phi_{a}(\overline{\mathbb{D}})$. Combining these two results, we have $\Phi_{a}(\overline{\mathbb{D}})=\sigma_{r}(T_{a}, \sigma_{\infty}) \subseteq \sigma(T_{a}, \sigma_{\infty})\subseteq \Phi_{a}(\overline{\mathbb{D}})$, which implies our result.
    \end{proof}

Finally, applying the classical fine spectrum partition $\sigma(T_a, \sigma_\infty) = \sigma_p(T_a) \cup \sigma_r(T_a) \cup \sigma_c(T_a)$, we immediately deduce the continuous spectrum of $T_a$ is empty.
    
	\begin{corollary}
	    For any $a \in \ell^1$, the continuous spectrum of the convolution operator $T_a \in \mathscr{B}(\sigma_\infty)$ is empty:
\[
\sigma_c(T_a, \sigma_\infty) = \emptyset.
\]
	\end{corollary}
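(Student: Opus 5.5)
The argument is pure bookkeeping with the fine spectral partition, using the three results just established. The sets $\sigma_p(T_a,\sigma_\infty)$, $\sigma_c(T_a,\sigma_\infty)$ and $\sigma_r(T_a,\sigma_\infty)$ are pairwise disjoint, and their union is $\sigma(T_a,\sigma_\infty)$. Every $\lambda\in\mathbb{C}$ falls into exactly one of four cases: $T_a-\lambda I$ is not injective; it is injective with non-dense range; it is injective with dense range but not surjective; or it is bijective. In the last case the inverse is bounded by the Open Mapping Theorem, since $\sigma_\infty$ is a Banach space. Hence $\sigma_c(T_a,\sigma_\infty)=\sigma(T_a,\sigma_\infty)\setminus\bigl(\sigma_p(T_a,\sigma_\infty)\cup\sigma_r(T_a,\sigma_\infty)\bigr)$.

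Next I substitute the known pieces. Corollary~\ref{WSTa} gives $\sigma(T_a,\sigma_\infty)=\Phi_a(\overline{\mathbb{D}})$. Theorem~\ref{RSTa} gives $\sigma_r(T_a,\sigma_\infty)=\Phi_a(\overline{\mathbb{D}})$. Theorem~\ref{pts-Ta-SI} gives $\sigma_p(T_a,\sigma_\infty)=\emptyset$. Therefore
\[
\sigma_c(T_a,\sigma_\infty)=\Phi_a(\overline{\mathbb{D}})\setminus\bigl(\emptyset\cup\Phi_a(\overline{\mathbb{D}})\bigr)=\emptyset .
\]

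There is no real obstacle here; all the substantive work lies in Theorem~\ref{RSTa}, Proposition~\ref{prop:spectral-inclusion} and Theorem~\ref{pts-Ta-SI}. The only point to check is the standing assumption of this section, namely that $a_n\neq 0$ for some $n\ge 1$. This assumption is what makes Theorem~\ref{pts-Ta-SI} valid. In the excluded scalar case $T_a=a_0I$, the continuous spectrum is also empty, as noted earlier in the section, so the conclusion holds for every $a\in\ell^1$.
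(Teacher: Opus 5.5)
Your proposal is correct and follows the paper's own route: the disjoint partition $\sigma=\sigma_p\cup\sigma_r\cup\sigma_c$ combined with $\sigma(T_a,\sigma_\infty)=\sigma_r(T_a,\sigma_\infty)=\Phi_a(\overline{\mathbb{D}})$ from Theorem~\ref{RSTa} and Corollary~\ref{WSTa} gives $\sigma_c(T_a,\sigma_\infty)=\emptyset$ immediately. Your separate check of the scalar case $T_a=a_0I$ is a worthwhile addition, since the paper covers that case only through its standing assumption and its earlier remark.
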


	\begin{example}
	Consider the sequence $a = (a_n)_{n=0}^\infty \in \ell^1$ defined by $a_0 = 0$ and $a_n = \frac{1}{(n-1)!}$ for $n \ge 1$, that is,
\[
a = \left(0, 1, 1, \frac{1}{2!}, \frac{1}{3!}, \dots\right).
\]
Since $\|a\|_{\ell^1} = \sum_{k=0}^\infty \frac{1}{k!} = e < \infty$, the convolution operator $T_a \in \mathscr{B}(\sigma_\infty)$ is bounded. The associated symbol function $\Phi_a \in \mathfrak{W}$ is given by
\[
\Phi_a(z) = \sum_{n=0}^\infty a_n z^n = z \sum_{k=0}^\infty \frac{z^k}{k!} = ze^z, \qquad z \in \overline{\mathbb{D}}.
\]
Applying Theorem~\ref{RSTa} yields that the point spectrum $\sigma_p(T_a, \sigma_\infty)$ and the continuous spectrum $\sigma_c(T_a, \sigma_\infty)$ are empty and the spectrum consists entirely of the residual spectrum:
\[
\sigma(T_a, \sigma_\infty) = \sigma_r(T_a, \sigma_\infty) = \Phi_a(\overline{\mathbb{D}}).
\]
The geometric region $\Phi_a(\overline{\mathbb{D}})$, representing the spectrum of $T_a$ in the complex plane, is depicted in the Figure \ref{fig:img1}.
		\begin{figure}[h] 
			\centering
			\includegraphics[width=.36\linewidth]{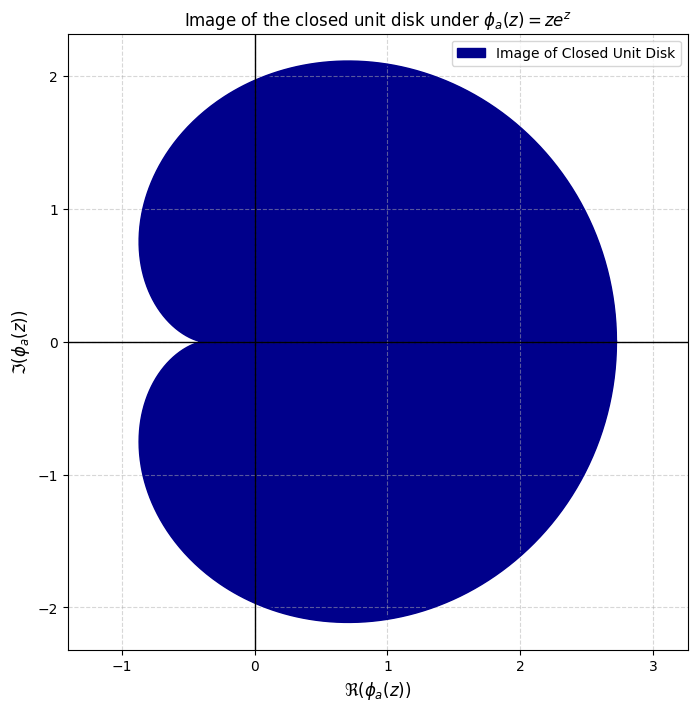}
			\caption{Image of closed unit disk under $\Phi_a(z)$}
			\label{fig:img1}
		\end{figure}
	\end{example}

	


\section{Fine Spectrum of Operator Polynomials and Band Matrices}\label{sec5}

Let $X \supset c_{00}$ be an arbitrary complex Banach space, $A \in \mathscr{B}(X)$ be a bounded linear operator, and
\[
p(z) = \sum_{k=0}^{n} a_k z^k \in \mathbb{C}[z]
\]
be a complex polynomial of degree $n \ge 1$. Naturally $p(A) = \sum_{k=0}^{n} a_k A^k$ is a bounded linear operator on $X$. 

The primary objective of this section is to develop a unified framework that derives the complete fine spectrum partition of $p(A)$ directly from the corresponding fine spectral components of $A$. 

A paramount application of this framework arises in the study of finite band triangular Toeplitz matrices. Setting $A = R$, the right shift operator on a sequence space, the operator polynomial $p(R)$ yields the discrete convolution operator
\[
T_a = p(R) = \sum_{k=0}^{n} a_k R^k,
\]
generated by the finite sequence $a = (a_0, a_1, \dots, a_n, 0, 0, \dots)$. The matrix representation of $T_a$ corresponds to a lower triangular Toeplitz matrix with finitely many non-zero bands. In a similar manner, by choosing $A=L$, the left shift operator, one obtains an upper triangular banded Toeplitz matrix. Although the spectral properties of banded triangular matrices on classical sequence spaces have been studied extensively in the literature (see, e.g., \cite{altun2011fine,birbonshi2017some,furkan2007fine,karakaya2010fine} and references therein), existing approaches rely on the separability of the underlying space. Consequently, non-separable sequence spaces, most notably $\ell^{\infty}$, are less studied. Specifically, the upper triangular case
has remained outside the scope of unified fine spectral characterizations.

By contrast, the fine spectral mapping principles established herein require no separability assumptions on $X$. This yields a powerful, coordinate-free methodology to determine the exact fine spectrum of lower or upper triangular finite band matrices across both separable and non-separable sequence spaces.

For this purpose, we begin by establishing the precise spectral mapping behavior for each fine component, starting with a known result on the point spectrum.

	\begin{lemma} \cite[Theorem 10.33, p.266]{rudin1991functional} \label{Rudin_pts}
		Let $A \in \mathscr{B}(X),\ \Omega$ open in $\mathbb{C},\ \sigma(A,X)\subset \Omega$. If $f$ is analytic on $\Omega$ and $f$ is not constant in any component of $\Omega$, then $f(\sigma_{p}(A,X))= \sigma_{p}(f(A),X)$.
	\end{lemma}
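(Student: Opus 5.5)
The plan is to prove both inclusions between $f(\sigma_p(A,X))$ and $\sigma_p(f(A),X)$ through the holomorphic functional calculus. The key tool is a factorization of $f(z)-\mu$ near the spectrum. Throughout, $f(A)$ is defined by the Cauchy integral over a contour in $\Omega$ surrounding $\sigma(A,X)$. I will use that $f\mapsto f(A)$ is a unital algebra homomorphism from the analytic functions on $\Omega$ into $\mathscr{B}(X)$.

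\emph{Inclusion $f(\sigma_p(A))\subseteq\sigma_p(f(A))$.}
Let $\lambda\in\sigma_p(A,X)$ and pick $x\neq0$ with $Ax=\lambda x$. Write $f(z)-f(\lambda)=(z-\lambda)g(z)$ with $g$ analytic on $\Omega$. Applying the functional calculus gives
\[
f(A)-f(\lambda)I=g(A)(A-\lambda I).
\]
Applying this to $x$ yields $(f(A)-f(\lambda)I)x=0$, so $f(\lambda)\in\sigma_p(f(A),X)$. This direction does not use the non-constancy hypothesis.

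\emph{Inclusion $\sigma_p(f(A))\subseteq f(\sigma_p(A))$.}
Let $\mu\in\sigma_p(f(A),X)$ with eigenvector $x\neq0$. By the spectral mapping theorem, $\mu\in f(\sigma(A))$.

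First I control the zeros of $f-\mu$. Since $\sigma(A)$ is compact and $f-\mu$ is not identically zero on any component of $\Omega$, the function $f-\mu$ has only finitely many zeros $\lambda_1,\dots,\lambda_m$ in $\sigma(A)$. Only finitely many components of $\Omega$ meet $\sigma(A)$, and in each of them the zeros are isolated, so they cannot accumulate on the compact set $\sigma(A)$.

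Next I factor. Write
\[
f(z)-\mu=\prod_{j=1}^m (z-\lambda_j)^{k_j}\,h(z),
\]
where $h$ is analytic on $\Omega$ and has no zeros on $\sigma(A)$. This requires some care, because $h$ might vanish elsewhere in $\Omega$. To handle this, I shrink $\Omega$ to an open set $\Omega'\supseteq\sigma(A)$ on which $h$ has no zeros; the value of $f(A)$ is unchanged by this. Then $1/h$ is analytic on $\Omega'$, so $h(A)$ is invertible.

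Finally I extract an eigenvalue of $A$. We have
\[
0=(f(A)-\mu I)x=h(A)\prod_{j}(A-\lambda_j I)^{k_j}x,
\]
and since $h(A)$ is invertible, $\prod_j (A-\lambda_j I)^{k_j}x=0$. The factors commute, and a product of injective operators is injective. So some factor $A-\lambda_j I$ must fail to be injective, giving $\lambda_j\in\sigma_p(A,X)$ with $f(\lambda_j)=\mu$.

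The main obstacle is the finiteness of the zero set together with the reduction to $\Omega'$. The non-constancy hypothesis enters exactly at the finiteness step: without it, $f-\mu$ could vanish identically on a component of $\Omega$ and the factorization would break down. I would also remark that the case $m=0$ cannot occur, since then $f(A)-\mu I=h(A)$ would be invertible, contradicting $\mu\in\sigma_p(f(A),X)$.
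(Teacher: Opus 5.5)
Your proof is correct and is essentially the standard argument behind the cited result (Rudin, Theorem 10.33); the paper quotes that result without giving its own proof. As in Rudin, you factor $f-f(\lambda)=(z-\lambda)g$ for one inclusion, and for the other you write $f-\mu$ as a polynomial with roots in $\sigma(A,X)$ times a function $h$ with no zeros on $\sigma(A,X)$, restricting to an open $\Omega'\supseteq\sigma(A,X)$ where $h\neq 0$ so that $h(A)$ is invertible.
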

	
	Next we establish relationships between $\sigma_{r}(p(A),X)$ and $p\Big(\sigma_{r}(A,X)\Big)$, as well as between $\sigma_{c}(p(A),X)$ and $p\Big(\sigma_{c}(A,X)\Big)$, where $A\in \mathscr{B}(X), \ p(z)= \sum_{k=0}^{n}a_{k}z^{k}$. In this regard, we have the following theorem.

\begin{lemma} \label{RSMT}
Let $X$ be a complex Banach space, $A \in \mathscr{B}(X)$, and $p(z) \in \mathbb{C}[z]$ be a non-constant polynomial. Then the residual spectrum of $p(A)$ is given by
\[
\sigma_r(p(A), X) = p(\sigma_r(A, X)) \setminus \sigma_p(p(A), X).
\]
\end{lemma}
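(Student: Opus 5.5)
The plan is to work with the factorization of $p(z)-\mu$ into linear factors and compare the range and kernel of $p(A)-\mu I$ with those of the individual factors $A-\lambda_j I$. Fix $\mu\in\mathbb{C}$. Since $p$ has degree $n\ge 1$, we can write $p(z)-\mu = a_n\prod_{j=1}^{n}(z-\lambda_j)$, where $\lambda_1,\dots,\lambda_n$ are the roots, so that $p(\lambda_j)=\mu$. Hence
\[
p(A)-\mu I = a_n\prod_{j=1}^{n}(A-\lambda_j I),
\]
with $a_n\neq 0$. The factors are polynomials in $A$, so they commute pairwise, and any single factor can be placed first or last in the product. Two elementary facts will be used throughout.

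\begin{enumerate}
\item[(a)] Suppose some factor is non-injective, say $(A-\lambda_j I)x=0$ with $x\neq 0$. Moving that factor to the right end shows $(p(A)-\mu I)x=0$.
\item[(b)] If $S,T\in\mathscr{B}(X)$ both have dense range, then $TS$ has dense range. Indeed, by continuity $T\bigl(\overline{S(X)}\bigr)\subseteq\overline{TS(X)}$. Since $\overline{S(X)}=X$, this gives $T(X)\subseteq\overline{TS(X)}$, and therefore $X=\overline{T(X)}\subseteq\overline{TS(X)}$.
\end{enumerate}

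\emph{Inclusion $\supseteq$.} Let $\mu=p(\lambda)$ with $\lambda\in\sigma_r(A,X)$ and $\mu\notin\sigma_p(p(A),X)$. Since $\lambda$ is one of the roots, we can write $p(A)-\mu I=(A-\lambda I)Q(A)$ for a polynomial $Q$. Then $\mathscr{R}(p(A)-\mu I)\subseteq\mathscr{R}(A-\lambda I)$, and the latter is not dense. So $p(A)-\mu I$ does not have dense range. It is injective because $\mu\notin\sigma_p(p(A),X)$. Hence $\mu\in\sigma_r(p(A),X)$.

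\emph{Inclusion $\subseteq$.} Let $\mu\in\sigma_r(p(A),X)$. By definition $p(A)-\mu I$ is injective, so $\mu\notin\sigma_p(p(A),X)$. By (a), every factor $A-\lambda_j I$ is then injective. If every factor also had dense range, then repeated use of (b), with multiplication by the nonzero scalar $a_n$ not affecting density, would make the range of $p(A)-\mu I$ dense, which is a contradiction. So some factor $A-\lambda_j I$ is injective but has non-dense range, i.e. $\lambda_j\in\sigma_r(A,X)$. Since $p(\lambda_j)=\mu$, we get $\mu\in p(\sigma_r(A,X))$, and combining with $\mu\notin\sigma_p(p(A),X)$ gives the claim.

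The only step that is not pure bookkeeping is fact (b), the density of the range of a product. Once that is in place, both inclusions follow directly from the commutation of the factors. No separability is used anywhere, which is consistent with the intended application to $\ell^\infty$ and $\sigma_\infty$.
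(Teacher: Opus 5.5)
Your proof is correct. The $\subseteq$ inclusion follows the paper exactly: factor $p(z)-\mu$, use injectivity of the product to force injectivity of each commuting factor, and use that a product of dense-range operators has dense range. You also supply the short proof of that density fact, which the paper only asserts.

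The $\supseteq$ inclusion is where you differ. The paper uses Hahn--Banach to pick a nonzero $f\in X'$ vanishing on $\mathscr{R}(A-\lambda I)$, derives $f(A^kx)=\lambda^k f(x)$, and concludes $f\bigl((p(A)-p(\lambda)I)x\bigr)=0$. You instead write $p(A)-p(\lambda)I=(A-\lambda I)Q(A)$ and read off $\mathscr{R}(p(A)-p(\lambda)I)\subseteq\mathscr{R}(A-\lambda I)$. For polynomials your route is more elementary, since it needs no Hahn--Banach at all.

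The paper's functional formulation pays off in Theorem~\ref{RSTa}, where the same computation is run for the norm-convergent series $T_a=\sum_k a_kR^k$. There, for $|\lambda|=1$, the quotient $(\Phi_a(z)-\Phi_a(\lambda))/(z-\lambda)$ need not have $\ell^1$ coefficients, so the corresponding $Q$ need not be bounded on $\sigma_\infty$. Your argument would then have to become a limiting one, with partial sums giving $\mathscr{R}(T_a-\Phi_a(\lambda)I)\subseteq\overline{\mathscr{R}(R-\lambda I)}$. Continuity of $f$ absorbs that limit in one line.
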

\begin{proof}
Let $\lambda \in \sigma_r(A, X)$. By definition, the range $\mathscr{R}(A - \lambda I)$ is not dense in $X$, i.e., $\overline{\mathscr{R}(A - \lambda I)} \neq X$. By the Hahn--Banach Theorem, there exists a non-zero continuous linear functional $f \in X^{\prime}$ such that $f(y) = 0$ for all $y \in \overline{\mathscr{R}(A - \lambda I)}$. 

This implies that $f((A - \lambda I)x) = 0$ for all $x \in X$, which gives $f(Ax) = \lambda f(x)$. By a straightforward induction argument, we obtain
\[
f(A^k x) = \lambda^k f(x) \quad \text{for all } k \in \mathbb{N}_0 \text{ and } x \in X.
\]
Writing $p(z) = \sum_{k=0}^n a_k z^k$, the linearity of $f$ yields
\[
f(p(A)x) = f\left( \sum_{k=0}^n a_k A^k x \right) = \sum_{k=0}^n a_k f(A^k x) = \sum_{k=0}^n a_k \lambda^k f(x) = p(\lambda) f(x) = f(p(\lambda) x)
\]
for all $x \in X$. Consequently, $f\left( \big( p(A) - p(\lambda)I \big)x \right) = 0$ for all $x \in X$. Since $f \neq 0$, it follows that the range $\mathscr{R}\big( p(A) - p(\lambda)I \big)$ is not dense in $X$. 

Therefore, $p(\lambda) \in \sigma_r(p(A), X) \cup \sigma_p(p(A), X)$. Since $\sigma_r(p(A), X) \cap \sigma_p(p(A), X) = \emptyset$, if $p(\lambda) \notin \sigma_p(p(A), X)$, then $p(\lambda) \in \sigma_r(p(A), X)$. This establishes the inclusion:
\[
p(\sigma_r(A, X)) \setminus \sigma_p(p(A), X) \subseteq \sigma_r(p(A), X).
\]

To prove the reverse inclusion, let $\mu \in \sigma_r(p(A), X)$. By the Fundamental Theorem of Algebra, we can factor the polynomial $p(z) - \mu$ as
\[
p(z) - \mu = c(z - z_1)(z - z_2) \cdots (z - z_n),
\]
where $c \in \mathbb{C} \setminus \{0\}$ and $z_1, z_2, \dots, z_n \in \mathbb{C}$. Consequently,
\[
p(A) - \mu I = c(A - z_1 I)(A - z_2 I) \cdots (A - z_n I).
\]
Since $\mu \in \sigma_r(p(A), X)$, the operator $p(A) - \mu I$ is injective, which implies that each commuting factor $(A - z_k I)$ must be injective for $k = 1, 2, \dots, n$. Furthermore, $\mu \in \sigma_r(p(A), X)$ implies that $\overline{\mathscr{R}(p(A) - \mu I)} \neq X$. 

If $\overline{\mathscr{R}(A - z_k I)} = X$ held for all $k = 1, 2, \dots, n$, the composition of these operators with dense ranges would also yield a dense range for $p(A) - \mu I$, a contradiction. Thus, there exists at least one $k \in \{1, 2, \dots, n\}$ such that $\overline{\mathscr{R}(A - z_k I)} \neq X$. Since $(A - z_k I)$ is injective, $z_k \in \sigma_r(A, X)$. 

Because $p(z_k) - \mu = 0$, we have $\mu = p(z_k) \in p(\sigma_r(A, X))$. Finally, since $\sigma_r(p(A), X)$ is disjoint from $\sigma_p(p(A), X)$, we conclude that $\mu \in p(\sigma_r(A, X)) \setminus \sigma_p(p(A), X)$, proving that
\[
\sigma_r(p(A), X) \subseteq p(\sigma_r(A, X)) \setminus \sigma_p(p(A), X).
\]
Combining both inclusions completes the proof.
\end{proof}

\begin{theorem}\label{Fine spectrum of operator poly}
Let $X$ be a complex Banach space, $A \in \mathscr{B}(X)$, and $p(z) \in \mathbb{C}[z]$ be a non-constant polynomial. Then the fine spectral subdivisions of $p(A)$ satisfy:
\begin{enumerate}
    \item[\rm (i)] $\sigma(p(A), X) = p(\sigma(A, X))$;
    \item[\rm (ii)] $\sigma_p(p(A), X) = p(\sigma_p(A, X))$;
    \item[\rm (iii)] $\sigma_r(p(A), X) = p(\sigma_r(A, X)) \setminus p(\sigma_p(A, X))$;
    \item[\rm (iv)] $\sigma_c(p(A), X) = p(\sigma(A, X)) \setminus p(\sigma_p(A, X) \cup \sigma_r(A, X))$.
\end{enumerate}
\end{theorem}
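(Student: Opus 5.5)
Items (i) and (ii) are essentially classical, and (iii) and (iv) follow from them together with Lemma~\ref{RSMT}. For (i), I will invoke the polynomial spectral mapping theorem directly. The factorization argument from the proof of Lemma~\ref{RSMT} also gives it: write $p(z)-\mu = c\prod_{k=1}^n (z-z_k)$, so $p(A)-\mu I = c\prod_k (A-z_kI)$ with commuting factors. This product is invertible if and only if every factor is invertible. If one factor fails to be invertible, then by commutativity it fails either to be bounded below or to be surjective, and the product inherits this failure. Hence $\mu\in\sigma(p(A),X)$ if and only if some root $z_k$ of $p(z)-\mu$ lies in $\sigma(A,X)$, i.e.\ $\mu\in p(\sigma(A,X))$.

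For (ii), I will apply Lemma~\ref{Rudin_pts} with $\Omega=\mathbb{C}$ and $f=p$. This $f$ is entire and non-constant on the single component $\mathbb{C}$, so $\sigma_p(p(A),X)=p(\sigma_p(A,X))$. A direct algebraic check is also available. If $Ax=\lambda x$ with $x\neq 0$, then $p(A)x=p(\lambda)x$. Conversely, if $p(A)-\mu I=c\prod_k(A-z_kI)$ fails to be injective, then some factor $A-z_kI$ fails to be injective, since a product of injective maps is injective. Then $z_k\in\sigma_p(A,X)$ and $\mu=p(z_k)$.

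For (iii), I will substitute (ii) into Lemma~\ref{RSMT}. Replacing $\sigma_p(p(A),X)$ by $p(\sigma_p(A,X))$ gives $\sigma_r(p(A),X)=p(\sigma_r(A,X))\setminus p(\sigma_p(A,X))$ immediately.

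For (iv), I will use that the three fine spectral components partition the spectrum. This gives $\sigma_c(p(A),X)=\sigma(p(A),X)\setminus(\sigma_p(p(A),X)\cup\sigma_r(p(A),X))$. By (i)--(iii), this equals $p(\sigma(A,X))\setminus\bigl(p(\sigma_p(A,X))\cup (p(\sigma_r(A,X))\setminus p(\sigma_p(A,X)))\bigr)$. The inner union simplifies to $p(\sigma_p(A,X))\cup p(\sigma_r(A,X))=p(\sigma_p(A,X)\cup\sigma_r(A,X))$, because the image of a union is the union of the images. This yields the stated formula.

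The only genuinely delicate step is (i), specifically the claim that a commuting product fails to be invertible whenever one factor does. If this is not simply cited, I will argue as follows. Suppose $A-z_kI$ is not invertible and the product $P=c\prod_j(A-z_jI)$ is invertible. Write $P=(A-z_kI)Q=Q(A-z_kI)$, where $Q$ is the product of the remaining factors. Then $(A-z_kI)QP^{-1}=I$ and $P^{-1}Q(A-z_kI)=I$, so $A-z_kI$ has both a right and a left inverse and is therefore invertible, a contradiction. Everything else is set-theoretic bookkeeping.
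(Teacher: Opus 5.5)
Your proposal is correct and follows essentially the same route as the paper: (i) by the classical spectral mapping theorem, (ii) from Lemma~\ref{Rudin_pts} with $\Omega=\mathbb{C}$, (iii) by substituting (ii) into Lemma~\ref{RSMT}, and (iv) from the disjoint partition of the spectrum. Your self-contained factorization arguments for (i) and (ii) are also valid, including the left/right inverse argument for commuting factors, and they only make explicit what the paper cites.
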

\begin{proof}
\noindent\textbf{(i)} This is the classical Spectral Mapping Theorem for bounded linear operators.

\smallskip
\noindent\textbf{(ii)} Directly follows form Lemma \ref{Rudin_pts}.

\smallskip
\noindent\textbf{(iii)} By Lemma \ref{RSMT}, the residual spectrum satisfies $\sigma_r(p(A), X) = p(\sigma_r(A, X)) \setminus \sigma_p(p(A), X)$. Substituting the identity $\sigma_p(p(A), X) = p(\sigma_p(A, X))$ from part (ii) directly yields
\[
\sigma_r(p(A), X) = p(\sigma_r(A, X)) \setminus p(\sigma_p(A, X)).
\]

\smallskip
\noindent\textbf{(iv)} Since the point, residual, and continuous spectra form a disjoint partition of the spectrum $\sigma(p(A), X)$, we have
\[
\sigma_c(p(A), X) = \sigma(p(A), X) \setminus \Big( \sigma_p(p(A), X) \cup \sigma_r(p(A), X) \Big).
\]
Using parts (ii) and (iii), the union of the point and residual spectra simplifies to
\begin{align*}
\sigma_p(p(A), X) \cup \sigma_r(p(A), X) &= p(\sigma_p(A, X)) \cup \Big( p(\sigma_r(A, X)) \setminus p(\sigma_p(A, X)) \Big) \\
&= p(\sigma_p(A, X)) \cup p(\sigma_r(A, X)) \\
&= p(\sigma_p(A, X) \cup \sigma_r(A, X)).
\end{align*}
Combining this with $\sigma(p(A), X) = p(\sigma(A, X))$ from part (i) establishes
\[
\sigma_c(p(A), X) = p(\sigma(A, X)) \setminus p(\sigma_p(A, X) \cup \sigma_r(A, X)),
\]
which completes the proof.
\end{proof}

As an immediate application of the general fine spectral mapping principles established in Theorem~\ref{Fine spectrum of operator poly}, we now specialize to the classes of $n$-band lower and upper triangular Toeplitz operators. Let $X(\supset c_{00})$ be a complex Banach sequence space on which both the right shift operator $R$ and the left shift operator $L$ are bounded. Given a polynomial $p(z) = \sum_{k=0}^{n} a_k z^k$ associated with a finitely supported sequence $a = (a_0, a_1, \dots, a_n, 0, 0, \dots) \in c_{00}$, any lower and upper triangular $n$-band operator can be represented as operator polynomials $p(R) = \sum_{k=0}^{n} a_k R^k$ and $p(L) = \sum_{k=0}^{n} a_k L^k$, respectively. Consequently, the entire fine spectral decompositions of such banded matrices are completely inherited from that of $R$ or $L$.


First, we apply Theorem \ref{Fine spectrum of operator poly} to settle the fine spectrum of the lower triangular band matrix $p(R)$ on the non-separable sequence space $\ell^\infty$, providing a complete characterization that has been missing in the literature.

For this purpose, recall that the spectral components of the right shift operator $R$ on $\ell^\infty$, as established in \cite{helein2014spectral}, are given by
\[
\sigma(R,\ell^\infty)
=\sigma_r(R,\ell^\infty)
=\overline{\mathbb{D}},
\qquad
\sigma_p(R,\ell^\infty)
=\sigma_c(R,\ell^\infty)
=\emptyset.
\]

We also establish the fine spectral properties of the left shift operator $L$ on the non-separable space $\ell^\infty$.

\begin{lemma}\label{lem:fine-spectrum-L}
Let $L \in \mathscr{B}(\ell^\infty)$ be the left shift operator given by $L(x_0, x_1, x_2, \dots) = (x_1, x_2, x_3, \dots)$. Then $\|L\|_{\mathscr{B}(\ell^\infty)} = 1$, and its spectral decomposition is
\[
\sigma(L, \ell^\infty) = \overline{\mathbb{D}}, \qquad \sigma_p(L, \ell^\infty) = \overline{\mathbb{D}}, \qquad \sigma_r(L, \ell^\infty) = \emptyset, \qquad \sigma_c(L, \ell^\infty) = \emptyset.
\]
\end{lemma}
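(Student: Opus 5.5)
First I will record the norm bound. For every $x\in\ell^\infty$ we have $\|Lx\|_\infty=\sup_{n\ge1}|x_n|\le\|x\|_\infty$. Evaluating at $e=(1,1,1,\dots)$ gives $\|Le\|_\infty=1=\|e\|_\infty$. Hence $\|L\|_{\mathscr{B}(\ell^\infty)}=1$, and therefore $\sigma(L,\ell^\infty)\subseteq\overline{\mathbb{D}}$.

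Next I will show that every $\lambda\in\overline{\mathbb{D}}$ is an eigenvalue by writing down an explicit eigenvector. Take $x^{(\lambda)}=(1,\lambda,\lambda^2,\dots)$. Since $|\lambda|\le1$, this sequence is bounded with $\|x^{(\lambda)}\|_\infty=1$, so it lies in $\ell^\infty$ and is non-zero. A direct computation gives $Lx^{(\lambda)}=(\lambda,\lambda^2,\dots)=\lambda x^{(\lambda)}$, and this works uniformly for $\lambda=0$ and for $|\lambda|=1$. So $\overline{\mathbb{D}}\subseteq\sigma_p(L,\ell^\infty)\subseteq\sigma(L,\ell^\infty)\subseteq\overline{\mathbb{D}}$, which forces $\sigma(L,\ell^\infty)=\sigma_p(L,\ell^\infty)=\overline{\mathbb{D}}$.

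Finally, the point, continuous and residual spectra are mutually disjoint and together make up $\sigma(L,\ell^\infty)$. The point spectrum already fills the whole spectrum, so $\sigma_r(L,\ell^\infty)=\sigma_c(L,\ell^\infty)=\emptyset$. This needs no density or duality argument. I do not expect any real obstacle: the one step needing care is the boundary $|\lambda|=1$, and there $x^{(\lambda)}$ is still bounded. That boundedness is exactly the feature of $\ell^\infty$, compared with $c_0$ or $\ell^p$, that makes the boundary circle part of the point spectrum.
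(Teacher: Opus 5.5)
Your proposal is correct and matches the paper's proof: the norm bound gives $\sigma(L,\ell^\infty)\subseteq\overline{\mathbb{D}}$, the eigenvector $(1,\lambda,\lambda^2,\dots)$ puts all of $\overline{\mathbb{D}}$ into the point spectrum, and disjointness of the fine spectral components forces the residual and continuous spectra to be empty. You also check that the norm equals $1$ using $e=(1,1,\dots)$, a step the paper calls straightforward and leaves unwritten.
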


\begin{proof}
It is straightforward that $\|L\|_{\mathscr{B}(\ell^\infty)} = 1$, implying $\sigma(L, \ell^\infty) \subseteq \overline{\mathbb{D}}$. For any $\lambda \in \overline{\mathbb{D}}$, setting $x = (1, \lambda, \lambda^2, \dots) \in \ell^\infty$ yields $Lx = \lambda x$. Hence every $\lambda \in \overline{\mathbb{D}}$ is an eigenvalue of $L$, giving $\overline{\mathbb{D}} = \sigma_p(L, \ell^\infty) \subseteq \sigma(L, \ell^\infty) \subseteq \overline{\mathbb{D}}$. Disjointness of the fine spectral components completes the proof.
\end{proof}

Applying Theorem~\ref{Fine spectrum of operator poly} alongside the known spectral data for $R$ and $L$ on $\ell^\infty$ we obtain the fine spectrum of both lower and upper triangular banded operators acting on $\ell^\infty$.

\begin{theorem}\label{thm:banded-linfty}
Let $p(z) = \sum_{k=0}^n a_k z^k$ be a non-constant complex polynomial. The fine spectral breakdowns of the lower banded operator $p(R)$ and the upper banded operator $p(L)$ on $\ell^\infty$ are given by:
\begin{enumerate}
    \item[(i)] {Lower Triangular Band Operator $p(R)$:}
    \[
    \sigma(p(R), \ell^\infty) = p\big(\overline{\mathbb{D}}\big), \quad \sigma_p(p(R), \ell^\infty) = \emptyset, \quad \sigma_r(p(R), \ell^\infty) = p\big(\overline{\mathbb{D}}\big), \quad \sigma_c(p(R), \ell^\infty) = \emptyset.
    \]
    \item[(ii)] {Upper Triangular Band Operator $p(L)$:}
    \[
    \sigma(p(L), \ell^\infty) = p\big(\overline{\mathbb{D}}\big), \quad \sigma_p(p(L), \ell^\infty) = p\big(\overline{\mathbb{D}}\big), \quad \sigma_r(p(L), \ell^\infty) = \emptyset, \quad \sigma_c(p(L), \ell^\infty) = \emptyset.
    \]
\end{enumerate}
\end{theorem}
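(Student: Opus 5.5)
The plan is to apply Theorem~\ref{Fine spectrum of operator poly} with $X=\ell^\infty$, first with $A=R$ and then with $A=L$. In each case we substitute the known fine spectral data of $A$ and simplify the set expressions in parts (i)--(iv). Since $p$ is non-constant, all hypotheses of Theorem~\ref{Fine spectrum of operator poly} hold. Both $R$ and $L$ are bounded on $\ell^\infty$ with norm $1$: for $R$ this is standard, and for $L$ it is Lemma~\ref{lem:fine-spectrum-L}. So $p(R)$ and $p(L)$ are well-defined elements of $\mathscr{B}(\ell^\infty)$, and they realise the lower and upper triangular banded Toeplitz matrices, respectively.

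\textbf{Part (i), the operator $p(R)$.} We use the data quoted from \cite{helein2014spectral}: $\sigma(R,\ell^\infty)=\sigma_r(R,\ell^\infty)=\overline{\mathbb{D}}$ and $\sigma_p(R,\ell^\infty)=\sigma_c(R,\ell^\infty)=\emptyset$. Theorem~\ref{Fine spectrum of operator poly} then gives the four components as follows.
\begin{itemize}
\item By (i), $\sigma(p(R),\ell^\infty)=p(\overline{\mathbb{D}})$.
\item By (ii), $\sigma_p(p(R),\ell^\infty)=p(\emptyset)=\emptyset$.
\item By (iii), $\sigma_r(p(R),\ell^\infty)=p(\overline{\mathbb{D}})\setminus\emptyset=p(\overline{\mathbb{D}})$.
\item By (iv), $\sigma_c(p(R),\ell^\infty)=p(\overline{\mathbb{D}})\setminus p(\emptyset\cup\overline{\mathbb{D}})=\emptyset$.
\end{itemize}
As a sanity check, the point spectrum is also empty directly: the argument of Theorem~\ref{pts-Ta-SI} is purely algebraic on $\mathbb{C}^{\mathbb{N}_0}$, so $p(R)-\lambda I$ is injective. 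This uses that some $a_k\neq0$ with $k\ge1$, which holds because $p$ is non-constant.

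\textbf{Part (ii), the operator $p(L)$.} Lemma~\ref{lem:fine-spectrum-L} gives $\sigma(L,\ell^\infty)=\sigma_p(L,\ell^\infty)=\overline{\mathbb{D}}$ and $\sigma_r(L,\ell^\infty)=\sigma_c(L,\ell^\infty)=\emptyset$. Theorem~\ref{Fine spectrum of operator poly} then gives the following.
\begin{itemize}
\item By (i), $\sigma(p(L),\ell^\infty)=p(\overline{\mathbb{D}})$.
\item By (ii), $\sigma_p(p(L),\ell^\infty)=p(\overline{\mathbb{D}})$.
\item By (iii), $\sigma_r(p(L),\ell^\infty)=p(\emptyset)\setminus p(\overline{\mathbb{D}})=\emptyset$.
\item By (iv), $\sigma_c(p(L),\ell^\infty)=p(\overline{\mathbb{D}})\setminus p(\overline{\mathbb{D}})=\emptyset$.
\end{itemize}
Again there is a direct check. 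For $\lambda\in\overline{\mathbb{D}}$ the vector $x=(1,\lambda,\lambda^2,\dots)\in\ell^\infty$ satisfies $L^kx=\lambda^kx$. Hence $p(L)x=p(\lambda)x$, and every point of $p(\overline{\mathbb{D}})$ is an eigenvalue.

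The only delicate point is the justification of part (ii) of Theorem~\ref{Fine spectrum of operator poly}, i.e.\ the point spectral mapping from Lemma~\ref{Rudin_pts}. For polynomials this is elementary. Factor $p(z)-\mu=c\prod_k(z-z_k)$, where the factors $A-z_kI$ commute. The product fails to be injective exactly when some factor fails to be injective. So $\mu\in\sigma_p(p(A))$ if and only if $\mu=p(z_k)$ for some $z_k\in\sigma_p(A)$. I would note this in passing so that the argument does not depend on any separability of $\ell^\infty$. Everything else is set-theoretic bookkeeping.
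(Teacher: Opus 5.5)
Your proposal is correct and matches the paper's argument: you substitute the known fine spectral data of $R$ on $\ell^\infty$ (from \cite{helein2014spectral}) and of $L$ (Lemma~\ref{lem:fine-spectrum-L}) into Theorem~\ref{Fine spectrum of operator poly} and simplify the resulting set expressions. Your factorization proof of the point-spectrum mapping for polynomials and your direct injectivity and eigenvector checks are valid but not needed; their only effect is to make the point-spectrum step independent of the cited result from Rudin.
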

\begin{remark}
    The spectrum and point spectrum in part (i) of Theorem \ref{thm:banded-linfty} were previously obtained in \cite{birbonshi2017some}, however the residual and continuous spectrum were not determined. The result in part (ii) was previously established in \cite{altun2011fine}.
\end{remark}

To illustrate the consistency and computational power of Theorem \ref{Fine spectrum of operator poly}, we show how it effortlessly reproduces known results on classical separable spaces without requiring laborious space-specific matrix derivations.

\begin{example}\label{ex:SpFpc}
Consider the operator polynomials $p(R)$ and $p(L)$ acting on the sequence space $c$. From \cite{birbonshi2017some}, the spectral components for $R$ and $L$ on $c$ are:
\[
\begin{aligned}
\sigma(R, c) &= \overline{\mathbb{D}}, & \sigma_p(R, c) &= \emptyset, & \sigma_r(R, c) &= \mathbb{D} \cup \{1\}, & \sigma_c(R, c) &= \partial\mathbb{D} \setminus \{1\}; \\
\sigma(L, c) &= \overline{\mathbb{D}}, & \sigma_p(L, c) &= \mathbb{D} \cup \{1\}, & \sigma_r(L, c) &= \emptyset, & \sigma_c(L, c) &= \partial\mathbb{D} \setminus \{1\}.
\end{aligned}
\]
Direct application of Theorem~\ref{Fine spectrum of operator poly} yields:
\begin{enumerate}
    \item[(i)] {Fine Spectrum of $p(R)$ on $c$:}
{\small    \[
    \sigma(p(R), c) = p\big(\overline{\mathbb{D}}\big), \quad \sigma_p(p(R), c) = \emptyset, \quad \sigma_r(p(R), c) = p(\mathbb{D}) \cup \{p(1)\}, \quad \sigma_c(p(R), c) = p\big(\overline{\mathbb{D}}\big) \setminus \Big[p(\mathbb{D}) \cup \{p(1)\}\Big].
    \]}
    \item[(ii)] {Fine Spectrum of $p(L)$ on $c$:}
   {\small \[
    \sigma(p(L), c) = p\big(\overline{\mathbb{D}}\big), \quad \sigma_p(p(L), c) = p(\mathbb{D}) \cup \{p(1)\}, \quad \sigma_r(p(L), c) = \emptyset, \quad \sigma_c(p(L), c) = p\big(\overline{\mathbb{D}}\big) \setminus \Big[p(\mathbb{D}) \cup \{p(1)\}\Big].
    \]}
\end{enumerate}
These results fully match the corresponding results in \cite{birbonshi2017some}, which further supports our findings.
\end{example}

	

\section{Conclusion}
In this paper, we have obtained a complete fine spectral characterization of general convolution operators $T_a$ acting on the non-separable space $\sigma_\infty.$ Furthermore, we have established a general fine spectral mapping framework for operator polynomials acting on arbitrary complex Banach spaces without any separability assumptions (Theorem \ref{Fine spectrum of operator poly}). This approach unifies and streamlines the existing literature on triangular banded Toeplitz operators across sequence spaces, without separability assumption. Crucially, by bypassing separability, our work yields the full fine spectral characterization of banded Toeplitz operators on non-separable sequence spaces such as $\ell^\infty$, resolving a longstanding gap in the literature.


\section*{Declarations}

\begin{itemize}
	\item Availability of data and materials: Not applicable.
	\item Competing interests: The authors declare that they have no competing interests.
	\item Funding: Not applicable.
	\item Authors' contributions: All authors contributed equally to this work.
    \item Acknowledgement: Mr. Sayan Saha sincerely acknowledge the University Grants Commission (UGC), Govt. of India, for financial support in the form of fellowship (NTA Ref. No.: 221610024673).
	\end{itemize}

\bibliographystyle{abbrv}
\bibliography{Bibliography_All}	
\end{document}